  \theoremstyle{change}
  {\theorembodyfont{\rm}
  \newtheorem{defi}{Definition}[section]
  \newtheorem{rem}[defi]{Remark.}
  
   }
  \newtheorem{lem}[defi]{Lemma.}
  \newtheorem{prop}[defi]{Proposition.}
  \newtheorem{thm}[defi]{Theorem.}
  \newtheorem{cor}[defi]{Corollary.}  }
\newenvironment{nrtxt}{\begin{trivlist}\refstepcounter{defi}\item\textbf{\arabic{section}.\arabic{defi}.}}%!%
{\end{trivlist}}
\newenvironment{txt}{\begin{trivlist}\item}{\end{trivlist}}
\newcommand{\gerade}[1]{{\rm#1}}
\newcommand{\PP}{{\mathbb P}}
\newcommand{\QQ}{{\mathbb Q}}
\newcommand{\ZZ}{{\mathbb Z}}
\newcommand{\cC}{{\mathcal C}}
\newcommand{\cG}{{\mathcal G}}
\newcommand{\cL}{{\mathcal L}}
\newcommand{\cP}{{\mathcal P}}
\newcommand{\cS}{{\mathcal S}}
\newcommand{\fM}{{\mathfrak M}}
\newcommand{\vu}{{\bm{u}}}
\newcommand{\vv}{{\bm{v}}}
\newcommand{\vw}{{\bm{w}}}
\newcommand{\vM}{{\bm{M}}}
\newcommand{\vN}{{\bm{N}}}
\newcommand{\vP}{{\bm{P}}}
\newcommand{\vQ}{{\bm{Q}}}
\newcommand{\vU}{{\bm{U}}}
\newcommand{\vX}{{\bm{X}}}
\newcommand{\vY}{{\bm{Y}}}
\newcommand{\Aut}{{\mathrm{Aut}}}
\newcommand{\End}{{\mathrm{End}}}
\newcommand{\diag}{{\mathrm{diag}}}
\DeclareMathOperator{\rad}{rad}
\DeclareMathOperator{\dist}{dist}
\newcommand{\T}{{\mathrm{T}}}
\newcommand{\dis}{\mathbin{\scriptstyle\triangle}}
\newcommand{\notdis}{\mathbin{\not\scriptstyle\triangle}}
\newcommand{\notpar}{\mathbin{\not{}\hspace{-0.35ex}{\parallel}\hspace{0.35ex}}}
\newcommand{\eps}{{\varepsilon}}
\newcommand{\GL}{{\mathrm{GL}}}
\let\phi=\varphi
\let\theta=\vartheta
\newcommand{\DelimArray}[4]{\left#1\begin{array}{*{#3}{c}}#4\end{array}\right#2}
\newcommand{\SDelimArray}[4]{\hbox{\scriptsize\arraycolsep=.5\arraycolsep
  $\left#1\begin{array}{*{#3}{c}}#4\end{array}\right#2$}}
\newcommand{\Mat}{\DelimArray()}
\newcommand{\SMat}{\SDelimArray()}
\newenvironment{proof}%[1]%
    {\begin{trivlist} \item {\it Proof.}} %#1\/:}}%
    {\mbox{~}\hfill$\square$\end{trivlist}}
\begin{document}

\title{On distant-isomorphisms of projective lines}

\author{Andrea Blunck \and Hans Havlicek}

\date{}

\maketitle

%%%%%%%%%%%%%%%%%%%%%%%%%%%%%%%
\begin{abstract}\noindent
We determine all distant-iso\-mor\-phisms between projective lines over
semilocal rings. In particular, for those semisimple rings that do not have a
simple component which is isomorphic to a field, every distant isomorphism
arises from a Jordan isomorphism of rings and a projectivity. We show this by
virtue of a one-one correspondence linking the projective line over a
semisimple ring with a Segre product of Grassmann spaces.

\noindent\textbf{Mathematics Subject Classification (2000).} 51C05, 51A10,
51A45, 17C50.

\noindent\textbf{Key Words.} projective line over a ring,
distant-isomorphism, Jordan homomorphism, Grassmann space, Segre product.
\end{abstract}
%%%%%%%%%%%%%%%%%%%%%%%%%%%%%%%%%

%
\section{Introduction.}\label{se:intro}

\begin{nrtxt}
The projective line over a (skew) field is often said to have no intrinsic
structure. However, if we consider a ring $R$ other than a field, then the
projective line $\PP(R)$ carries a non-trivial relation ``distant''.
Intuitively, points $p$ and $q$ of $\PP(R)$ are \emph{distant\/} (in symbols:
$p\dis q$) if $p$ and $q$ span the entire projective line, whereas
non-distant points are ``too close together'' in order to share this
property. Therefore non-distant points are also called \emph{neighbouring}.
If $R$ is a field, then $p\dis q$ just means that $p\neq q$, whence in this
case the distant relation does not deserve our interest, and $\PP(R)$ indeed
has no intrinsic structure.

For rings that are not fields, however, the following question is
interesting: Which bijections between projective lines over rings $R$ and
$R'$ map distant points to distant points and non-distant points to
non-distant points? Every mapping of this kind will be called a
\emph{distant-iso\-mor\-phism}. It seems that no attention has been paid to
this question so far. One reason could be that we cannot expect an algebraic
description of distant-iso\-mor\-phisms for some classes of rings like
fields, direct products of fields, or local rings. Here, loosely speaking,
``algebraic description'' means that the mapping under consideration is the
product of a bijection $\PP(R)\to\PP(R')$ which stems from a Jordan
isomorphism $R\to R'$ and a projectivity of $\PP(R')$.

In \cite{blu+h-00c} it was shown that such an algebraic description of
distant-iso\-mor\-phisms is possible if both $R$ and $R'$ are rings of $2\times
2$ matrices over fields. In the present note we extend this result to the case
when both $R$ and $R'$ are direct products of matrix rings over fields,
provided that no ring of $1\times 1$ matrices is present in these products
(Theorem~\ref{thm:alg.darst.prod}).

In order to reach this goal we bring together several concepts and results.
Some basic facts about the projective line $\PP(R)$ are presented in
Section~\ref{se:morphismen}. In Section~\ref{se:adjazenz} we recall the
definition and some properties of a \emph{parallelism\/} (written
as~$\parallel$) on $\PP(R)$; it was introduced in \cite{blu+h-03a}, and it
reflects the Jacobson radical of $R$ in geometric terms. This
relation~$\parallel$ is, by its definition, invariant under
distant-iso\-mor\-phisms. At this point we have to emphasize that many
authors, like Herzer in \cite{herz-95}, use the symbol~$\parallel$ and the
phrase ``parallel points'' in a different meaning, namely for what we simply
call non-distant points. Next, we introduce an \emph{adjacency relation\/}
(written as~$\sim$) on $\PP(R)$. This notion comes from the geometry of
Grassmann spaces. It is well known that the projective line over a ring of
matrices with entries from a field is in one-one correspondence with the
points of some Grassmann space. In a very general form, this is a result
about the projective line over the endomorphism ring of a (possibly
infinite-dimensional) vector space; see \cite{blunck-99}. Clearly, we could
use this one-one correspondence in order to define an adjacency relation on
$\PP(R)$ as the preimage of the adjacency relation on the associated
Grassmann space. However, we use an intrinsic definition of adjacency on
$\PP(R)$ in terms of the distant relation; the idea for this definition is
taken from \cite{blu+h-02z}.

In Section~\ref{se:isomorphismen} we show that one cannot expect an algebraic
description of distant-iso\-mor\-phisms for rings with a non-zero Jacobson
radical, because for these rings one has a non-trivial parallelism, and every
permutation of $\PP(R)$ that maps each point to a parallel one is a
distant-auto\-mor\-phism. Then, in Section~\ref{se:endo-ringe}, we establish
our main result in a first step for matrix rings over fields
(Theorem~\ref{thm:alg.darst}); this is generalized in Section \ref{se:produkte}
to \emph{semisimple\/} rings, i.e. direct products of finitely many matrix
rings over fields (Theorem~\ref{thm:alg.darst.prod}). Altogether, the results
from Sections~\ref{se:isomorphismen}, \ref{se:endo-ringe}, and
\ref{se:produkte} give a complete description of the distant-iso\-mor\-phisms
for \emph{semilocal\/} rings, i.e. rings which are semisimple modulo their
Jacobson radical (Corollary~\ref{cor:iso}).

In Sections~\ref{se:endo-ringe} and \ref{se:produkte} we use Grassmann spaces
and their Segre products to represent the projective lines over semisimple
rings in terms of partial linear spaces. In this setting,
distant-iso\-mor\-phisms correspond to collineations of these spaces
(Propositions~\ref{prop:iso.endo} and \ref{prop:iso.prod}), and the adjacency
relations are the key to all this. Then we can use a recent result by Naumowicz
and Pra\.zmowski (see \cite{nau+p-01}) in order to accomplish our work. As a
matter of fact, we need their result in a slightly generalized form, which is
presented as an Appendix to this article.

Alternatively, our results can also be viewed as a generalization of Chow's
theorem \cite{chow-49} on adjacency preserving bijections of Grassmann spaces
to Segre products of such spaces. Cf.\ also \cite{benz-92}, \cite{huang-98},
and \cite{wan-96}.

There is a wealth of literature on harmonic mappings for projective lines
over rings. A survey is given by Bartolone and Bartolozzi \cite{bart+b-85},
and Lashkhi \cite{lash-97}; see also \cite{blu+h-03}. As every harmonic
mapping takes distant points to distant points, our results may also be
viewed as a contribution to this topic. However, this connection will be
discussed elsewhere.
\end{nrtxt}

\begin{nrtxt}
Throughout this paper we shall only consider associative rings with a unit
element $1\neq 0$, which is preserved by (anti-)homomorphisms, inherited by
subrings, and acts unitally on modules.  The group of invertible elements
(units) of a ring $R$ is denoted by $R^*$. Moreover, by a \emph{field\/} we
always mean a not necessarily commutative field. We refer to \cite{lam-91}
for those notions and results from ring theory which are used in the text
without reference.
\end{nrtxt}

\section{Distant-morphisms.}\label{se:morphismen}

\begin{nrtxt}
Consider a ring $R$ and the free left $R$-module $R^2$. The \emph{projective
line over\/} $R$ is the orbit of the free cyclic submodule $R(1,0)$ under the
natural right action of the group $\GL_2(R)$ of invertible $2\times 2$ matrices
with entries in $R$. In other words, $\PP(R)$ is the set of all $p\leq R^2$
such that $p=R(a,b)$, where $(a,b)$ is the first row of an invertible matrix.
See \cite[p.~785]{herz-95}. If also $(c,d)$ is the first row of an invertible
matrix,  then $R(a,b)=R(c,d)$ if, and only if, there is a unit $u\in R^*$ with
$(c,d)=u(a,b)$ (see \cite[Prop.~2.1]{blu+h-00b}).

Let $\big((a,b), (c,d)\big)$ be a basis of $R^2$ or, said differently, let
$\SMat2{a&b\\c&d}\in\GL_2(R)$. Then the points $p=R(a,b)$ and $q= R(c,d)$ are
called \emph{distant}, and we write $p\dis q$. Obviously, $\dis$ is an
anti-reflexive and symmetric relation on $\PP(R)$.

In the special case that $R$ is a  field, $\PP(R)$ is  the usual projective
line over $R$, and $\dis$ is the relation $\ne$.

The projective line $\PP(R)$ together with the relation $\dis$ can also be
seen as a graph (the \emph{distant graph\/}) with vertex set $\PP(R)$ and two
vertices joined by an edge if, and only if, they are distant. See
\cite{blu+h-01a} for graph theoretic properties of $(\PP(R),\dis)$.

For each point $p\in\PP(R)$ we let $\dis(p)$ be the neighbourhood of $p$ in
the distant graph, i.e.\ the set of all points distant to $p$.
\end{nrtxt}

\begin{nrtxt}
We are interested in mappings preserving the  distant relation. Let $R$ and
$R'$ be rings, and let $\phi:\PP(R)\to\PP(R')$ be a mapping with
\begin{equation}\label{def:dis-morphism}
  \forall\,p,q\in\PP(R): p\dis q \Rightarrow p^\phi\dis  q^\phi,
\end{equation}
where, by abuse of notation, the distant relation on $\PP(R')$ is denoted by
$\dis$ rather than $\dis'$. Then we call $\phi$ a \emph{distant-mor\-phism\/}
(or \emph{$\dis$-mor\-phism}). So the $\dis$-mor\-phisms are exactly the
homomorphisms of our distant graphs. If $\phi$ is a bijective $\dis$-mor\-phism
and also $\phi^{-1}$ is a $\dis$-mor\-phism then, as usual, we call $\phi$ a
\emph{$\dis$-iso\-mor\-phism}.
\end{nrtxt}

\begin{rem}
It is worth noting that a bijective $\dis$-mor\-phism need not be a
$\dis$-iso\-mor\-phism. Indeed, let $\ZZ$ be the ring of integers and let
$\QQ$ be the field of rational numbers. A pair $(a,b)\in\ZZ^2$ is the first
row of an invertible matrix if, and only if, it is unimodular, i.e., if there
are integers $x$ and $y$ such that $ax+by=1$ \cite[Prop.~1.4.1]{herz-95}. On
the other hand, each point of $\PP(\QQ)$ can be written in the form
$\QQ(a,b)$ with an unimodular pair $(a,b)\in\ZZ^2$. In either case two
unimodular pairs of integers yield the same point precisely when they are
proportional by a factor $\pm 1$. Therefore the mapping $\phi:\PP(\ZZ) \to
\PP(\QQ) : \ZZ(a,b)\mapsto \QQ(a,b)$ is well defined and bijective. Condition
(\ref{def:dis-morphism}) is satisfied, as here $p^\phi\dis q^\phi$ just means
$p^\phi\neq q^\phi$. However, by $\ZZ(1,0)\notdis\ZZ(1,2)$ and
$\QQ(1,0)\neq\QQ(1,2)$, the bijection $\phi$ is not a $\dis$-iso\-mor\-phism.

Cf.\ Remark~1 in \cite[p.~359]{bart+b-85} for another example of such a
bijective $\dis$-mor\-phism.
\end{rem}

\begin{nrtxt}\label{:beispiele}
There are several classes of examples of $\dis$-mor\-phisms:

(a) By definition, the group $\GL_2(R)$ acts (transitively) on $\PP(R)$ and
leaves $\dis$ invariant. So, for each $\gamma\in\GL_2(R)$, the induced
\emph{projectivity\/} $\widetilde\gamma:\PP(R)\to \PP(R):R(a,b)\mapsto
R\big((a,b)^\gamma\big)$ is a $\dis$-auto\-mor\-phism.

(b) Each ring homomorphism $\alpha:R\to R'$ gives rise to a mapping
\begin{equation}\label{def:tilde.alpha}
\widetilde\alpha:\PP(R)\to\PP(R'):R(a,b)\mapsto R'(a^\alpha,b^\alpha).
\end{equation}
One can easily check that  $\widetilde\alpha$ maps distant points to distant
points and hence is a $\dis$-mor\-phism. In particular, if $\alpha$ is an
isomorphism of rings, then $\widetilde{\alpha}$ is a $\dis$-iso\-mor\-phism.

(c) Let $\alpha:R\to R'$ be an anti-homo\-mor\-phism of rings. In general,
$\widetilde\alpha$ as in (\ref{def:tilde.alpha}) is not well defined any
more. Instead, we may proceed as follows. For each point $p$ there is a
regular matrix $M$ with first row $(a,b)$, say, and $p=R(a,b)$. Let
$(v,w)^\T$, where $\T$ denotes transposition of matrices, be the second
column of $M^{-1}$. Then we define
\begin{equation}\label{def:tilde.alpha.anti}
\widetilde\alpha: \PP(R)\to\PP(R'):p\mapsto R'(-w^\alpha,v^\alpha).
\end{equation}
This is a well defined $\dis$-mor\-phism, as follows from the proof of
\cite[Thm.~5.2]{blu+h-01b} and \cite[Rem.~5.4]{blu+h-01b}, where all this is
studied in the context of chain geometries and dual pairs of $R$-modules.
(For finite-dimensional algebras this result is due to Herzer
\cite[Prop.~3.3]{herz-87a}.)
\par
Suppose that $\alpha$ is also a homomorphism, whence the ring $R^\alpha$ is
commutative. So we may apply the cofactor method to invert matrices with
coefficients in $R^\alpha$. By calculating the second column of
$(M^{-1})^{\alpha}=(M^\alpha)^{-1}$ in two ways, we get
$(v^\alpha,w^\alpha)^\T=(\det M^\alpha)^{-1}(-b^\alpha,a^\alpha)^\T$. Thus, the
definitions in (\ref{def:tilde.alpha}) and (\ref{def:tilde.alpha.anti})
coincide, and it is unambiguous to use the same symbol $\widetilde\alpha$ in
either case.

(d) Now consider a \emph{Jordan homomorphism\/} $R\to R'$, i.e.\ an additive
mapping $\alpha$ satisfying $1^\alpha=1\in R'$ and $(aba)^\alpha=a^\alpha
b^\alpha a^\alpha$ for all $a,b\in R$. Of course, each homomorphism or
anti-homo\-mor\-phism is also a Jordan homomorphism. See
Corollary~\ref{cor:jordan-prod} and \cite[3.8]{blu+h-03} for examples of Jordan
homomorphisms that are neither homomorphisms nor anti-homo\-mor\-phisms.

In \cite[Thm.~4.4~(b)]{blu+h-03} it is shown that each Jordan homomorphism
gives rise to \emph{at least one\/} $\dis$-mor\-phism. However, the
definition itself is rather involved, and we do not need it here. Instead, we
present a special case, due to Bartolone.

Let $R$ be a ring of \emph{stable rank\/ $2$} (for the definition see
\cite[p.~1039]{veld-95}). Then
\begin{equation}\label{eq:bart1}
\PP(R)=\{R(ab-1,a)\mid a,b\in R\}.
\end{equation}
If, moreover, $R'$ is any ring and $\alpha:R\to R'$ is a Jordan homomorphism,
then
\begin{equation}\label{eq:bart2}
\widetilde\alpha:R(ab-1,a)\mapsto R'(a^\alpha b^\alpha-1, a^\alpha)
\end{equation}
is a $\dis$-mor\-phism $\PP(R)\to \PP(R')$. See  \cite[Thm.~(2.4)]{bart-89}.
If $\alpha$ is a homomorphism or an anti-homo\-mor\-phism, then
$\widetilde\alpha$ coincides with the mapping of (\ref{def:tilde.alpha}) or
(\ref{def:tilde.alpha.anti}), respectively. In case of a homomorphism this is
obvious, otherwise we refer to \cite[Rem.~5.4]{blu+h-01b}.
\end{nrtxt}

\section{Adjacency.}\label{se:adjazenz}

\begin{nrtxt}\label{para:parallel+adj}
The distant relation on $\PP(R)$ yields two further binary relations. First we
recall the following definition from \cite{blu+h-03a}: Consider $p,q\in\PP(R)$.
We say that $p$ and $q$ are \emph{parallel\/} if
\begin{equation}\label{def:parallel}
    \dis(p)\subseteq \dis(q).
\end{equation}
In this case we write $p\parallel q$. In  \cite[Cor.~2.3]{blu+h-03a} it is
shown that~$\parallel$ (called \emph{radical parallelism\/} there) is an
equivalence relation. So
\begin{equation}\label{eq:parallel.symmetrie}
  p\parallel q \Leftrightarrow \dis(p)=\dis(q).
\end{equation}

For $p,q\in\PP(R)$ we always have
\begin{equation}\label{eq:parallel.notdis}
    p\parallel q \Rightarrow p\notdis q,
\end{equation}
because otherwise $q\in\dis(p)\subseteq \dis(q)$ in contradiction to $q\notdis
q$.

The \emph{Jacobson radical\/} $\rad R$ of the ring $R$ is the intersection of
all maximal left (or all maximal right) ideals of $R$. It is a two-sided ideal
of $R$, whence one can consider the canonical epimorphism
\begin{equation}\label{eq:canon.epi}
   \pi:R\to \overline R:=R/\rad R: a\mapsto \overline a:= a+ \rad R.
\end{equation}
According to (\ref{def:tilde.alpha}), we obtain the associated
$\dis$-mor\-phism
\begin{equation}\label{eq:canon.Pepi}
  \widetilde\pi:\PP(R)\to\PP(\overline R): p\mapsto \overline
p:=p^{\widetilde\pi}
\end{equation}
which is surjective (see \cite[Prop.~3.5]{blu+h-00b}). By
\cite[Thm.~2.2]{blu+h-03a}, for $p,q\in \PP(R)$ we have
\begin{equation}\label{eq:parallel}
p\parallel q \Leftrightarrow \overline p = \overline q.
\end{equation}
\end{nrtxt}

\begin{nrtxt}
We call the points $p,q\in\PP(R)$ \emph{adjacent}, and write $p\sim q$, if
\begin{equation}\label{def:adjacent}
\exists\, r\in\PP(R): r \notpar p,q \mbox{ and } {\dis(r)}\subseteq
\dis(p)\cup\dis(q).
\end{equation}
In this situation we also say that $p$ is \emph{adjacent\/} to $q$ \emph{via}
$r$.

Obviously, the relation $\sim$ is symmetric and anti-reflexive; we even have
$p\sim q \Rightarrow p \notpar q$, since otherwise, if $p\sim q$ via $r$, we
had $\dis(r)\subseteq\dis(p)\cup\dis(q)=\dis(p)$ and hence $r\parallel p$,
which is not allowed by (\ref{def:adjacent}). A description of the relation
$\sim$ for arbitrary rings is beyond the scope of this article. However, in
\ref{:adjacent} and Proposition~\ref{prop:adjazent} we shall show where this
definition has its origin.

The next lemma says how  $\dis$ and $\sim$ behave on parallel classes.
\end{nrtxt}

\begin{lem}\label{lem:mod.parallel}
Let $p_1,q_1,r_1,p_2,q_2,r_2\in\PP(R)$ with $p_1\parallel p_2$, $q_1\parallel
q_2$, and $r_1\parallel r_2$. Then

\vspace{-0.9ex}\begin{enumerate}\itemsep0pt
 \item\label{lem:mod.parallel.a}
$p_1\dis q_1\Leftrightarrow p_2\dis q_2$,
 \item\label{lem:mod.parallel.b}
$p_1\sim q_1$ via $r_1$ $\Leftrightarrow $ $p_2\sim q_2$ via $r_2$.
\end{enumerate}
\end{lem}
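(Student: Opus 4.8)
The plan is to deduce everything from the characterization of $\parallel$ in terms of neighbourhoods. Recall from (\ref{eq:parallel.symmetrie}) that $p_1\parallel p_2$ is equivalent to $\dis(p_1)=\dis(p_2)$, and likewise for the pairs $q_i$ and $r_i$. So the hypotheses give us $\dis(p_1)=\dis(p_2)$, $\dis(q_1)=\dis(q_2)$, and $\dis(r_1)=\dis(r_2)$, and the whole lemma becomes a statement about sets of the form $\dis(x)$.

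For part (\ref{lem:mod.parallel.a}), I would argue: $p_1\dis q_1$ means $q_1\in\dis(p_1)$. Now $q_1\in\dis(p_1)=\dis(p_2)$ says $q_1\dis p_2$, i.e.\ $p_2\in\dis(q_1)=\dis(q_2)$, which is exactly $p_2\dis q_2$. The converse is symmetric. This is the easy half and requires only (\ref{eq:parallel.symmetrie}) and the symmetry of $\dis$.

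For part (\ref{lem:mod.parallel.b}), unwind the definition (\ref{def:adjacent}): $p_1\sim q_1$ via $r_1$ means $r_1\notpar p_1$, $r_1\notpar q_1$, and $\dis(r_1)\subseteq\dis(p_1)\cup\dis(q_1)$. Using $\dis(r_1)=\dis(r_2)$, $\dis(p_1)=\dis(p_2)$, $\dis(q_1)=\dis(q_2)$, the inclusion $\dis(r_1)\subseteq\dis(p_1)\cup\dis(q_1)$ is literally the same condition as $\dis(r_2)\subseteq\dis(p_2)\cup\dis(q_2)$. It remains to handle the two non-parallelism conditions; here I invoke that $\parallel$ is an equivalence relation (transitivity, from \cite[Cor.~2.3]{blu+h-03a}): if $r_2\parallel p_2$ then, since $r_1\parallel r_2$ and $p_1\parallel p_2$, we would get $r_1\parallel p_1$, contradicting $r_1\notpar p_1$; hence $r_2\notpar p_2$, and symmetrically $r_2\notpar q_2$. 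So all three defining clauses transfer, giving $p_2\sim q_2$ via $r_2$, and the converse again follows by symmetry of the hypotheses.

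I do not expect any real obstacle: the lemma is essentially a bookkeeping exercise showing that the relevant relations depend only on the neighbourhoods $\dis(\cdot)$, which are constant on $\parallel$-classes by (\ref{eq:parallel.symmetrie}). The only point that needs a named earlier result beyond (\ref{eq:parallel.symmetrie}) is the transitivity of $\parallel$ used in part (\ref{lem:mod.parallel.b}); everything else is immediate from the definitions and the symmetry of $\dis$. If anything requires care, it is simply writing the chain of equalities and implications cleanly rather than any genuine mathematical difficulty.
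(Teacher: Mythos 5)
Your proof is correct and follows essentially the same route as the paper's: part (\ref{lem:mod.parallel.a}) is the same two-step transfer via the neighbourhoods $\dis(\cdot)$, and part (\ref{lem:mod.parallel.b}) uses exactly the same combination of (\ref{eq:parallel.symmetrie}) for the inclusion clause and the transitivity of $\parallel$ for the non-parallelism clauses. No issues.
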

\begin{proof}
(\ref{lem:mod.parallel.a}): Since $p_1\parallel p_2$, we get from $p_1\dis q_1$
that $q_1\dis p_2$. Then, since $q_1\parallel q_2$, we get from $q_1\dis p_2$
that $p_2\dis q_2$.

(\ref{lem:mod.parallel.b}): Let $p_1\sim q_1$ via $r_1$, i.e., $r_1\notpar
p_1,q_1$ and $\dis(r_1)\subseteq \dis(p_1)\cup \dis(q_1)$. Transitivity
of~$\parallel$ yields $r_2\notpar p_2,q_2$. Moreover,
$\dis(r_2)=\dis(r_1)\subseteq \dis(p_1)\cup\dis(q_1)=\dis(p_2)\cup\dis(q_2)$.
\end{proof}

\begin{txt}
This lemma means that the relations $\dis$ and $\sim$ are well defined on the
set of parallel classes of $\PP(R)$. By (\ref{eq:parallel}), this can also be
formulated in terms of the surjective  mapping $\widetilde\pi$ onto the
projective line over $\overline R=R/\rad R$\/:
\end{txt}

\begin{cor}\label{cor:Rquer}
Let $\widetilde\pi:\PP(R)\to \PP(\overline R):p\mapsto\overline p$ be as in
\gerade{(\ref{eq:canon.Pepi})}. Then for all $p,q,r\in\PP(R)$ we have

\vspace{-0.9ex}\begin{enumerate}\itemsep0pt
 \item\label{Rquer.a}
$p\dis q\Leftrightarrow \overline p\dis \overline q$,
 \item\label{Rquer.b}
$p\sim q$ via $r$ $\Leftrightarrow $ $\overline p\sim \overline q$ via
$\overline r$.
\end{enumerate}
\end{cor}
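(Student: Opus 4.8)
The plan is to deduce the corollary directly from Lemma~\ref{lem:mod.parallel} together with the characterization of radical parallelism in equation~(\ref{eq:parallel}), namely $p\parallel q\Leftrightarrow\overline p=\overline q$, and the fact (stated in~\ref{para:parallel+adj}) that $\widetilde\pi$ is surjective. The key observation is that the fibres of $\widetilde\pi$ are exactly the parallel classes of $\PP(R)$, so $\widetilde\pi$ identifies $\PP(\overline R)$ with the quotient set $\PP(R)/{\parallel}$; both $\dis$ and $\sim$ descend to well-defined relations on that quotient by Lemma~\ref{lem:mod.parallel}, and what remains is to check that the induced relations on $\PP(\overline R)$ coincide with the intrinsic relations $\dis$ and $\sim$ computed in the ring $\overline R$ itself.

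First I would note that $\overline R=R/\rad R$ has zero Jacobson radical, so on $\PP(\overline R)$ parallelism is trivial: $\overline p\parallel\overline q$ forces $\overline p=\overline q$ by~(\ref{eq:parallel}) applied to $\overline R$ (whose radical is zero). Thus in $\PP(\overline R)$ the condition ``$\overline r\notpar\overline p,\overline q$'' in the definition~(\ref{def:adjacent}) of adjacency simply reduces to $\overline r\neq\overline p$ and $\overline r\neq\overline q$. For part~(\ref{Rquer.a}): given $p\dis q$ in $\PP(R)$, write $p=R(a,b)$, $q=R(c,d)$ with $\SMat2{a&b\\c&d}\in\GL_2(R)$; applying the entrywise map $\pi$ and using that $\pi$ is a homomorphism, the image matrix lies in $\GL_2(\overline R)$, so $\overline p\dis\overline q$. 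Conversely, if $\overline p\dis\overline q$, pick any preimages, lift them back along the surjection $\widetilde\pi$ and use Lemma~\ref{lem:mod.parallel}(\ref{lem:mod.parallel.a}): choosing $p_1,q_1$ with $\overline{p_1}=\overline p$, $\overline{q_1}=\overline q$ and combining with the previous direction shows $p\dis q\Leftrightarrow\overline p\dis\overline q$ independently of the choice of representatives. Since the forward implication ``$p\dis q\Rightarrow\overline p\dis\overline q$'' together with surjectivity of $\widetilde\pi$ and Lemma~\ref{lem:mod.parallel}(\ref{lem:mod.parallel.a}) already gives the equivalence, I would phrase it that way.

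For part~(\ref{Rquer.b}) the argument is the same in spirit. If $p\sim q$ via $r$ in $\PP(R)$, then $r\notpar p,q$ translates under $\widetilde\pi$ into $\overline r\neq\overline p,\overline q$ by~(\ref{eq:parallel}), and the inclusion $\dis(r)\subseteq\dis(p)\cup\dis(q)$ translates, using part~(\ref{Rquer.a}) fibrewise and the surjectivity of $\widetilde\pi$, into $\dis(\overline r)\subseteq\dis(\overline p)\cup\dis(\overline q)$; hence $\overline p\sim\overline q$ via $\overline r$. Conversely, suppose $\overline p\sim\overline q$ via some point $s\in\PP(\overline R)$; by surjectivity choose $r\in\PP(R)$ with $\overline r=s$, and choose arbitrary $p',q'\in\PP(R)$ over $\overline p,\overline q$. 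The relation $\dis(s)\subseteq\dis(\overline p)\cup\dis(\overline q)$ together with part~(\ref{Rquer.a}) and surjectivity yields $\dis(r)\subseteq\dis(p')\cup\dis(q')$, and $s\neq\overline p,\overline q$ gives $r\notpar p',q'$; so $p'\sim q'$ via $r$, and then Lemma~\ref{lem:mod.parallel}(\ref{lem:mod.parallel.b}) transfers this to $p\sim q$ via $r$ for the originally given $p,q$. The only mild subtlety — and the step I would be most careful with — is the bookkeeping needed to pass an inclusion of neighbourhoods through $\widetilde\pi$: one must know that $\widetilde\pi^{-1}(\dis(\overline p))=\dis(p)$, equivalently that $\dis(p)$ is a union of parallel classes, which is precisely the content of Lemma~\ref{lem:mod.parallel}(\ref{lem:mod.parallel.a}) reformulated via~(\ref{eq:parallel}); with that in hand the set-theoretic translation between $\PP(R)$ and $\PP(\overline R)$ is automatic and the corollary follows.
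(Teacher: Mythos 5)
Your overall strategy --- descend $\dis$ and $\sim$ to the set of parallel classes via Lemma~\ref{lem:mod.parallel}, and identify that set with $\PP(\overline R)$ through (\ref{eq:parallel}) and the surjectivity of $\widetilde\pi$ --- is the same as the paper's. Your treatment of part (\ref{Rquer.b}), including the reduction of ``$\overline r\notpar\overline p,\overline q$'' to inequality because $\rad\overline R=\{0\}$, and the transfer of the neighbourhood inclusion via $\widetilde\pi^{-1}\big(\dis(\overline p)\big)=\dis(p)$, is sound \emph{once part (\ref{Rquer.a}) is available}.

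The gap is in the converse direction of (\ref{Rquer.a}). You assert that the forward implication $p\dis q\Rightarrow\overline p\dis\overline q$, the surjectivity of $\widetilde\pi$, and Lemma~\ref{lem:mod.parallel}~(\ref{lem:mod.parallel.a}) ``already give the equivalence''. They do not. Those three facts only say that $\dis$ descends to a well-defined relation on the quotient $\PP(R)/{\parallel}\cong\PP(\overline R)$ and that this induced relation is \emph{contained in} the intrinsic distant relation of $\PP(\overline R)$; nothing forces the two relations to coincide, since a bijective $\dis$-morphism need not reflect $\dis$ (compare the paper's example $\PP(\ZZ)\to\PP(\QQ)$ in Section~\ref{se:morphismen}). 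Concretely, you choose preimages $p_1,q_1$ of $\overline p,\overline q$ and then invoke ``the previous direction'', but that implication runs the wrong way: you never establish $p_1\dis q_1$. What is genuinely needed is the radical-specific fact that $\widetilde\pi$ also \emph{reflects} the distant relation, e.g.\ because a matrix over $R$ is invertible as soon as its image modulo $\rad R$ is invertible (units lift modulo the radical, and the radical of the $2\times 2$ matrix ring over $R$ consists of the matrices with entries in $\rad R$); equivalently, one shows $\widetilde\pi\big(\dis(p)\big)=\dis(\overline p)$. The paper covers exactly this point by citing \cite[Props.~3.1, 3.2]{blu+h-00b} for assertion (\ref{Rquer.a}). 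With that ingredient supplied, the rest of your argument goes through.
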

\begin{txt}
Assertion (\ref{Rquer.a}) can also be shown algebraically, see
\cite[Props.~3.1, 3.2]{blu+h-00b}.
\end{txt}

\begin{rem}\label{rem:local}
As an example, we study the three relations $\dis$, $\parallel$, and~$\sim$ for
local rings, i.e.\ rings $R$ where the set $R\setminus R^*$ of non-units is an
ideal. Then $R\setminus R^*$ is the unique  maximal ideal of $R$ and coincides
with the Jacobson radical.

For an arbitrary ring $R$ the following characterizations were established in
\cite[Prop.~2.4.1]{herz-95} and \cite[Thm.~2.5]{blu+h-03a}: $R$ is local if,
and only if, $\notdis$ is an equivalence relation on $\PP(R)$, which in turn
is equivalent to
\begin{equation}
 \forall\,p,q\in\PP(R): p\notdis
q\Leftrightarrow p\parallel q.
\end{equation}
We claim that for a local ring $R$ we have
\begin{equation}
\forall\,p,q\in\PP(R): p\dis q \Leftrightarrow p\sim q.
\end{equation}
In fact, we infer from Corollary~\ref{cor:Rquer} (\ref{Rquer.a}) that $p\dis
q\Leftrightarrow \overline p\dis\overline q$. Since $\overline R$ is a field,
we can rewrite the last condition as $\overline p\neq \overline q$. Taking into
account that $\PP(\overline R)$ contains at least three different points, we
see that $\overline p\neq \overline q\Leftrightarrow \overline p \sim \overline
q$ (via any point $\overline r\neq \overline p,\overline q$). By
Corollary~\ref{cor:Rquer} (\ref{Rquer.b}), this is equivalent to $p\sim q$.
\end{rem}

\begin{nrtxt}
Let us recall the concept of a \emph{partial linear space\/} (or
\emph{semilinear space\/} or \emph{partial line space\/}): This is a point-line
geometry $\fM=(\cP, \cL)$, where any two distinct points are joined by at most
one line, and every line contains at least two distinct points. Note that we do
not make any richness conditions on $\cP$ or $\cL$, as some authors do. If
points are on a common line then they are said to be \emph{collinear}. A
\emph{strong subspace\/} (or \emph{linear subspace\/}) of $\fM$ is a set
$\cS\subseteq\cP$ of mutually collinear points such that $\cS$ is closed under
lines. Following \cite{nau+p-01}, a partial linear space $\fM$ is called
\emph{strongly connected\/} if for every point $p\in\cP$ and every strong
subspace $\cS\subseteq\cP$, with $\#\,\cS>1$, there is a finite sequence
$\cS_0,\cS_1,\ldots,\cS_m$ of strong subspaces such that $\cS=\cS_0$,
$p\in\cS_{m}$ and $\#(\cS_{i-1}\cap\cS_i)\geq 2$ for all
$i\in\{1,2,\ldots,m\}$.
\end{nrtxt}

\begin{nrtxt}\label{:adjacent}
Let $\vU$ be a left vector space over a field~$K$, $\dim\vU\geq 1$, and let
$R=\End_K(\vU)$ be its endomorphism ring. We restrict ourselves to the case
when $n:=\dim\vU$ is finite, even though most of the subsequent results can be
formulated in such a way that they remain valid also in case of infinite
dimension.

The following is taken from \cite[Thm.~2.4]{blunck-99}, whereas in
\cite{blu+h-00b} the viewpoint of projective geometry was adopted: Let $\cG$ be
the set of all $n$-dimensional subspaces of the vector space $\vU\times \vU$.
Then
\begin{equation}\label{def:Psi}
\Psi:\PP(R)\to \cG: R(a,b)\mapsto \vU^{(a,b)}:= \{(\vu^a,\vu^b)\mid \vu\in
\vU\}
\end{equation}
is a well defined bijection mapping distant points of $\PP(R)$ to
complementary subspaces in $\cG$ and non-distant points to non-complementary
subspaces. Moreover, the groups $\GL_2(R)$ and $\Aut_K(\vU\times \vU)$ are
isomorphic, and their actions on $\PP(R)$ and on $\cG$, respectively, are
equivalent via $\Psi$.
%In particular,
%the mappings induced on $\cG$ by $\GL_2(R)$ arise from {projective
%collineations} of the projective space $\PP(K,\vU\times \vU)$.
% \cite{blunck-00a} ...

On $\cG$ there is a binary relation
\begin{equation}\label{def:adjacent.G}
 \vP\sim \vQ:\Leftrightarrow \dim(\vP\cap\vQ)=n-1
 \Leftrightarrow \dim(\vP+\vQ)=n+1,
\end{equation}
where $\sim$ is to be read as \emph{adjacent}. It has been studied by many
authors. See, for example, \cite{chow-49}, \cite{huang-98}, and \cite{wan-96}.
The graph with vertex set $\cG$ and two vertices $\vP,\vQ$ joined by an edge
if, and only if, $\vP\sim \vQ$ is called the \emph{Grassmann graph\/} on $\cG$.
Compare, among others, \cite{brouwer+cohen+neu-89}, \cite{fu+huang-94}, and
\cite{numata-90}.

The set $\cG$ is the point set of the \emph{Grassmann space\/} $\fM=(\cG,\cL)$,
with line set $\cL$ consisting of all \emph{pencils\/}
\begin{equation}
\cG[\vM,\vN]:=\{\vX\in\cG\mid \vM\subset\vX\subset\vN\},
\end{equation}
where $\dim \vM=n-1$, $\dim \vN=n+1$, and $\vM\subset \vN$. The Grassmann space
$\fM$ has at least one line and at least three points on every line. It is a
strongly connected partial linear space; see \cite[Prop.~1.13]{nau+p-01}, where
this is shown for a wider class of Grassmann spaces. (In \cite{nau+p-01} the
term \emph{spaces of pencils\/} is used instead. If $\vU$ has infinite
dimension then there are points which cannot be joined by a (finite) polygonal
path, i.e., the partial linear space $\fM$ is not even connected.)

The following result is crucial for the entire paper:
\end{nrtxt}

\begin{prop}\label{prop:adjazent}
Let $R=\End_K(\vU)$ be the endomorphism ring of an $n$-dimensional left vector
space $\vU$ over a field $K$, $1\leq n<\infty$, let $\fM$ be the associated
Grassmann space, and let $\Psi:\PP(R)\to \cG$ be given as in
\gerade{(\ref{def:Psi})}. Then the following statements hold for all $p$, $q$,
and $r\in\PP(R)$\gerade:

\vspace{-0.9ex}\begin{enumerate}\itemsep0pt
 \item\label{adjazent.a}
    $p\sim q \Leftrightarrow p^\Psi \sim q^\Psi \Leftrightarrow p^\Psi$ and
    $q^\Psi$  are distinct collinear points of $\fM$.

 \item\label{adjazent.b}
$p\sim q$  via $r \Leftrightarrow p^\Psi$, $q^\Psi$, and $r^\Psi$ are distinct
collinear points of $\fM$.
\end{enumerate}
\end{prop}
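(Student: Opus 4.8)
The idea is to carry everything over to the Grassmann side by means of $\Psi$. Since $\vU$ is finite-dimensional, $R=\End_K(\vU)$ is a simple ring, so $\rad R=0$, and hence by~(\ref{eq:parallel}) the parallelism on $\PP(R)$ is trivial: $p\parallel q$ if and only if $p=q$. As $\Psi$ maps distant pairs to complementary subspaces and non-distant pairs to non-complementary subspaces, the statement ``$p\sim q$ via $r$'' translates, with $\vP:=p^\Psi$, $\vQ:=q^\Psi$, $\vR:=r^\Psi$ in $\cG$, into the purely linear statement: $\vR\neq\vP$, $\vR\neq\vQ$, and every $n$-dimensional subspace of $W:=\vU\times\vU$ that is a complement of $\vR$ is also a complement of $\vP$ or of $\vQ$. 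Thus the whole proposition reduces to the following \emph{Key Lemma}: for $\vP,\vQ,\vR\in\cG$ with $\vR\neq\vP$ and $\vR\neq\vQ$, every complement of $\vR$ is a complement of $\vP$ or of $\vQ$ precisely when $\vP,\vQ,\vR$ are three distinct points on a common pencil of $\fM$.

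Granting the Key Lemma, part~(b) is immediate, since ``$p\sim q$ via $r$'' means exactly $r\notpar p,q$ (i.e.\ $\vR\neq\vP,\vQ$) together with $\dis(r)\subseteq\dis(p)\cup\dis(q)$ (i.e.\ the complement condition), and the lemma forces $\vP\neq\vQ$, hence $p\neq q$. For part~(a): $p\sim q$ holds if and only if $p\sim q$ via some $r$, and by~(b) this happens if and only if $\vP$ and $\vQ$ are distinct and lie on a common pencil of $\fM$ --- the nontrivial implication using that every pencil of $\fM$ carries at least three points, so a third point $\vR$ is available. Finally, two distinct $\vP,\vQ\in\cG$ lie on a common pencil exactly when $\dim(\vP\cap\vQ)=n-1$ (put $\vM=\vP\cap\vQ$ and $\vN=\vP+\vQ$), which is the same as $\vP\sim\vQ$ in the sense of~(\ref{def:adjacent.G}) and the same as ``$\vP,\vQ$ are distinct collinear points of $\fM$''; this is assertion~(a).

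It remains to sketch the Key Lemma. For the ``if'' part, choose a basis of $W$ adapted to $\vM\subset\vN$ with $\vP=\vM\oplus\langle e_n\rangle$ and $\vQ=\vM\oplus\langle e_{n+1}\rangle$; then $\vR=\vM\oplus\langle e_n+\lambda e_{n+1}\rangle$ for a scalar $\lambda\neq0$. If a complement $\vX$ of $\vR$ met both $\vP$ and $\vQ$, say in $v=m_1+ae_n$ and $w=m_2+be_{n+1}$ with $a,b\neq0$ and $m_i\in\vM$, then $a^{-1}v+(\lambda b^{-1})w$ would be a nonzero vector of $\vX\cap\vR$, which is absurd; so $\vX$ is a complement of $\vP$ or of $\vQ$. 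For the ``only if'' part I argue by contradiction, using repeatedly that any nonzero vector outside $\vR$ extends to an $n$-dimensional subspace meeting $\vR$ only in $0$ (enlarge greedily inside the $2n$-dimensional $W$). First, a vector in $(\vP\cap\vQ)\setminus\vR$ would extend to a complement of $\vR$ meeting both $\vP$ and $\vQ$; hence $\vP\cap\vQ\subseteq\vR$, so $\dim(\vP\cap\vQ)\leq n-1$. Next, if $\dim(\vP\cap\vQ)\leq n-2$, pass to $W/\vR$: the images of $\vP$ and $\vQ$ there are nonzero (as $\vR\neq\vP,\vQ$), and one cannot pick linearly independent vectors, one from each image (otherwise lift and extend to a complement of $\vR$ meeting $\vP$ and $\vQ$); but that failure forces both images to be one-dimensional and equal, i.e.\ $\vP+\vR=\vQ+\vR$ has dimension $n+1$, whence $\dim(\vP\cap\vQ)\geq n-1$ --- a contradiction. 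So $\vM:=\vP\cap\vQ$ has dimension $n-1$ and lies in $\vR$; put $\vN:=\vP+\vQ$, of dimension $n+1$. If $\vR\not\subseteq\vN$, then $\vR\cap\vN=\vM$ and $\vR=\vM\oplus\langle z\rangle$ with $z\notin\vN$, and in a suitable basis the $n$-dimensional subspace $\langle e_n,e_{n+1},e_{n+3},\ldots,e_{2n}\rangle$ is a complement of $\vR$ meeting both $\vP$ and $\vQ$, again a contradiction. Hence $\vM\subseteq\vR\subseteq\vN$, so $\vP,\vQ,\vR$ are distinct collinear points of $\cG[\vM,\vN]$.

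The main obstacle is this ``only if'' direction: from the failure of collinearity one must manufacture an $n$-dimensional subspace complementary to $\vR$ but meeting each of $\vP$ and $\vQ$. The one fact that makes the argument close is the dimension observation $\vP+\vR=\vQ+\vR\Rightarrow\dim(\vP\cap\vQ)\geq n-1$, which rules out the single configuration in which the generic extension construction breaks down; everything else is organising the three cases ($\vP\cap\vQ\not\subseteq\vR$; $\dim(\vP\cap\vQ)\leq n-2$; $\vR\not\subseteq\vP+\vQ$) and carrying out the one coordinate computation, which also settles the ``if'' direction.
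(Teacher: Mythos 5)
Your argument is correct, but it takes a genuinely different route from the paper: the paper's proof is essentially a two-line reduction --- it observes that $\rad R=\{0\}$ turns $\notpar$ into $\neq$, and then delegates the entire substance (the first equivalence in (a) and all of (b)) to Theorem~3.2 of \cite{blu+h-02z}, while the second equivalence in (a) is just the remark that two adjacent subspaces lie on the pencil $\cG[p^\Psi\cap q^\Psi,\,p^\Psi+q^\Psi]$. You instead prove the cited result from scratch, via your Key Lemma characterizing, in purely linear-algebraic terms, those triples $\vP,\vQ,\vR\in\cG$ for which every complement of $\vR$ is a complement of $\vP$ or of $\vQ$. Your three-step ``only if'' argument ($\vP\cap\vQ\subseteq\vR$; then $\dim(\vP\cap\vQ)=n-1$ via the quotient $W/\vR$; then $\vR\subseteq\vP+\vQ$) and the coordinate computation for the ``if'' direction are all sound, including in the degenerate case $n=1$ and over a noncommutative $K$; the only step you elide is why $\vP\cap\vQ\subseteq\vR$ already forces $\dim(\vP\cap\vQ)\leq n-1$ (if the intersection had dimension $n$ then $\vP=\vQ\subseteq\vR$, whence $\vP=\vR$, contradicting $\vR\neq\vP$), which is a one-line fix. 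What the paper's approach buys is brevity and consistency with its framework of complementarity-preserving bijections; what yours buys is a self-contained proof that makes the geometric content of the intrinsic adjacency relation (\ref{def:adjacent}) visible without consulting \cite{blu+h-02z}.
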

\begin{proof}
Note that in the present situation the condition $r\notpar p,q$ in
(\ref{def:adjacent}) reduces to $r\neq p,q$, because $R=\End_K(\vU)$ is simple,
whence $\rad R=\{0\}$, and parallelity means equality by (\ref{eq:parallel}).
Now the first equivalence in (\ref{adjazent.a}) is an immediate consequence of
\cite[Thm.~3.2]{blu+h-02z}. Furthermore, $p^\Psi\sim q^\Psi$ just means that
$p^\Psi\neq q^\Psi$ both belong to a line of $\fM$, namely $\cG[p^\Psi\cap
q^\Psi,p^\Psi+q^\Psi]$.

For the assertion in (\ref{adjazent.b}) we refer to the proof of
\cite[Thm.~3.2]{blu+h-02z}.
\end{proof}

%%%%%%%%%%%%%%%%%%%%%%%%%%%%%%%%%%%%%%%%%%%%%%%%
\section{Distant-isomorphisms.}\label{se:isomorphismen}
%%%%%%%%%%%%%%%%%%%%%%%%%%%%%%%%%%%%%%%%%%%%%%%%%%%%

\begin{nrtxt}
After having seen in \ref{:beispiele} how to construct $\dis$-mor\-phisms from
given algebraic mappings, we now aim at a description of all
$\dis$-iso\-mor\-phisms. Clearly, every $\dis$-iso\-mor\-phism is also a
$\parallel$-iso\-mor\-phism and a $\sim$-iso\-mor\-phism, where
$\parallel$-mor\-phisms and $\sim$-mor\-phisms are defined like
$\dis$-mor\-phisms.
\end{nrtxt}

\begin{rem}
Note that an injective $\dis$-mor\-phism $\phi$ need not be a
$\parallel$-mor\-phism. Take, e.g., the ring $R=K(\eps)$ (with $\eps^2=0$) of
\emph{dual numbers\/} over a field $K$. The right regular representation of
$R$ is the monomorphism $\alpha:R\to\End_K(R): a\mapsto (x\mapsto xa)$. The
associated mapping $\widetilde\alpha$ is injective, whence it maps the
parallel points $R(1,0)$ and $R(1,\eps)$ to distinct points, which are
non-parallel as $\End_K(R)$ has zero radical.
\end{rem}

\begin{nrtxt}
In a first step we describe $\parallel$-iso\-mor\-phisms. Let $R$ and $R'$ be
rings. By formula (14) in \cite[p.~118]{blu+h-03a} we obtain $\#\rad R
=\#\,\cC$ for every parallel class $\cC\subset\PP(R)$. Recall that according to
(\ref{eq:parallel}) the set of parallel classes of $\PP(R)$ can be identified
via (\ref{eq:canon.Pepi}) with $\PP(\overline R)$, where $\overline R=R/\rad
R$. Hence there exists a $\parallel$-iso\-mor\-phism $\PP(R)\to\PP(R')$ if, and
only if,
\begin{equation}\label{eq:exist-parallel}
    \#\rad R =\#\rad R' \mbox{ and } \#\,\PP(\overline R) = \#\,\PP(\overline{R'}).
\end{equation}
Provided that these conditions are satisfied, the description of all
$\parallel$-iso\-mor\-phisms $\PP(R)\to\PP(R')$ is a trivial task: If we fix
one $\parallel$-iso\-mor\-phism $\phi_0:\PP(R)\to\PP(R')$, then every
$\parallel$-iso\-mor\-phism $\phi:\PP(R)\to\PP(R')$ has the form
$\alpha\phi_0$, where $\alpha$ belongs to the group of
$\parallel$-auto\-mor\-phisms of $\PP(R)$. This group in turn is isomorphic
to the wreath product of the symmetric group of an arbitrarily chosen
parallel class $\cC_0\subset\PP(R)$ and the symmetric group of $\PP(\overline
R)$.

For each $\parallel$-iso\-mor\-phism $\phi:\PP(R)\to\PP(R')$ the mapping
\begin{equation}\label{def:phiquer}
 \overline\phi:\PP(\overline R)\to\PP(\overline {R'}):\overline p
  \mapsto\overline{p^\phi}
\end{equation}
is a well defined bijection. Observe that \emph{every\/} bijection
$\PP(\overline R)\to\PP(\overline{R'})$ arises in this way. The following is an
immediate consequence of Corollary~\ref{cor:Rquer} (\ref{Rquer.a}):
\end{nrtxt}

\begin{prop}\label{prop:morphRquer}
Let $R$ and $R'$ be rings. A $\parallel$-iso\-mor\-phism
$\phi:\PP(R)\to\PP(R')$ is a $\dis$-iso\-mor\-phism if, and only if, the
corresponding mapping $\overline\phi$, defined in \gerade{(\ref{def:phiquer})},
is a $\dis$-iso\-mor\-phism.
\end{prop}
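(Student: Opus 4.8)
The plan is to exploit the compatibility between a $\parallel$-isomorphism $\phi$ and the canonical projections $\widetilde\pi:\PP(R)\to\PP(\overline R)$ and $\widetilde\pi':\PP(R')\to\PP(\overline{R'})$. By construction of $\overline\phi$ in (\ref{def:phiquer}) we have the intertwining relation $\overline{p^\phi}=(\overline p)^{\overline\phi}$ for every $p\in\PP(R)$, so that $\overline\phi\circ\widetilde\pi=\widetilde\pi'\circ\phi$ (with the appropriate reading of composition). Since $\widetilde\pi$ and $\widetilde\pi'$ are surjective, $\overline\phi$ is completely determined by $\phi$, and conversely every fibre of $\widetilde\pi$ (a parallel class) is mapped by $\phi$ onto a single fibre of $\widetilde\pi'$ because $\phi$ preserves $\parallel$ in both directions. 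This is the structural fact that lets distant-incidence be transported between the two levels.

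Next I would prove the two implications separately, both times invoking Corollary~\ref{cor:Rquer}~(\ref{Rquer.a}), i.e. the equivalences $p\dis q\Leftrightarrow\overline p\dis\overline q$ and $p'\dis q'\Leftrightarrow\overline{p'}\dis\overline{q'}$. Suppose first that $\phi$ is a $\dis$-isomorphism. To see that $\overline\phi$ is a $\dis$-morphism, take $\overline p\dis\overline q$ in $\PP(\overline R)$; lift to $p,q\in\PP(R)$ via surjectivity of $\widetilde\pi$. Then $p\dis q$ by Corollary~\ref{cor:Rquer}~(\ref{Rquer.a}), hence $p^\phi\dis q^\phi$ since $\phi$ is a $\dis$-morphism, hence $\overline{p^\phi}\dis\overline{q^\phi}$ again by Corollary~\ref{cor:Rquer}~(\ref{Rquer.a}), and this says $(\overline p)^{\overline\phi}\dis(\overline q)^{\overline\phi}$. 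The same argument applied to $\phi^{-1}$ (which is also a $\dis$-morphism and a $\parallel$-isomorphism, and whose associated map on the quotient is $(\overline\phi)^{-1}$) shows $\overline\phi^{-1}$ is a $\dis$-morphism; since $\overline\phi$ is a bijection, it is a $\dis$-isomorphism.

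For the converse, assume $\overline\phi$ is a $\dis$-isomorphism and let $p\dis q$ in $\PP(R)$. Then $\overline p\dis\overline q$ by Corollary~\ref{cor:Rquer}~(\ref{Rquer.a}), so $(\overline p)^{\overline\phi}\dis(\overline q)^{\overline\phi}$, i.e. $\overline{p^\phi}\dis\overline{q^\phi}$, and one last application of Corollary~\ref{cor:Rquer}~(\ref{Rquer.a}) on the $R'$-side gives $p^\phi\dis q^\phi$. Thus $\phi$ is a $\dis$-morphism; running the same chain with $\phi^{-1}$ and $(\overline\phi)^{-1}$ in place of $\phi$ and $\overline\phi$ shows $\phi^{-1}$ is a $\dis$-morphism as well, so $\phi$ is a $\dis$-isomorphism.

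The proof is essentially a diagram chase, so there is no genuine obstacle; the only point requiring a line of care is the identity $(\overline\phi)^{-1}=\overline{\phi^{-1}}$, i.e. that the quotient map associated with the inverse of a $\parallel$-isomorphism is the inverse of the associated quotient map. This follows at once from the intertwining relation $\overline\phi\circ\widetilde\pi=\widetilde\pi'\circ\phi$ together with surjectivity of the two projections, and I would spell it out in a single sentence before invoking it.
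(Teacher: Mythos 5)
Your proof is correct and is exactly the argument the paper has in mind: the paper states Proposition~\ref{prop:morphRquer} as an immediate consequence of Corollary~\ref{cor:Rquer}~(\ref{Rquer.a}) without writing out the details, and your diagram chase (including the check that $(\overline\phi)^{-1}=\overline{\phi^{-1}}$) is just that consequence spelled out.
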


\begin{nrtxt}
Proposition~\ref{prop:morphRquer} implies that it suffices to study the
$\dis$-iso\-mor\-phisms $\PP(\overline R)\to\PP(\overline {R'})$. However,
given a $\dis$-iso\-mor\-phism $\phi:\PP(R)\to\PP(R')$, the mapping
$\overline\phi$ only will tell us how $\phi$ acts on the \emph{set of all
parallel classes}. Unless the radical of $R$ is zero, $\overline \phi$ contains
no information at all about the action of $\phi$ on any \emph{parallel class}.
Every bijection $\beta : \PP(R)\to\PP(R)$ which fixes all parallel classes (as
sets) is a $\dis$-auto\-mor\-phism, with $\overline\beta$ the identity on
$\PP(\overline R)$. Therefore, in general, we cannot expect to describe $\phi$
``algebraically''. However, as above, we have that the group of
$\dis$-auto\-mor\-phisms of $\PP(R)$ is isomorphic to the wreath product of the
symmetric group of a parallel class $\cC_0$ and the group of
$\dis$-auto\-mor\-phisms of $\PP(\overline R)$.

We refer to \cite[pp.~359--360]{bart+b-85} for an example of a
$\dis$-auto\-mor\-phism which cannot be described algebraically even though the
Jacobson radical is zero: It is based upon the projective line over a
polynomial ring $K[X]$, where $K$ is a commutative field with characteristic
$\neq 2$. Note that such a ring does not admit proper Jordan endomorphisms
\cite[pp.~2--3]{jac-68}.
\end{nrtxt}

\begin{nrtxt}\label{:semilokal}
A ring $R$ is called \emph{semilocal\/} if $R/\rad R$ is artinian. Since
$\rad(R/\rad R)=\{0\}$, this means that R/rad R is \emph{semisimple}, i.e.
artinian with zero radical. By the Wedderburn-Artin theorem, a ring is
semisimple if, and only if, it is isomorphic to a direct product of finitely
many matrix rings over fields. Note that according to \cite[Sec.~2]{veld-95}
each semilocal ring has stable rank $2$, and observe that in \cite{herz-95}
and \cite{veld-95} the terminology is different, as semilocal rings are
called \emph{semiprimary\/} there.

The results of Sections~\ref{se:endo-ringe} and \ref{se:produkte} together with
Proposition~\ref{prop:morphRquer} will give a complete description of the
$\dis$-iso\-mor\-phisms for projective lines over semilocal rings. See
Corollary~\ref{cor:iso}. We shall work with endomorphism rings of
finite-dimensional vector spaces rather than matrix rings.
\end{nrtxt}

\section{Endomorphism rings.}\label{se:endo-ringe}

\begin{nrtxt}
We now study $\dis$-iso\-mor\-phisms between the projective lines over
endomorphism rings $R=\End_K(\vU)$ and $R'=\End_{K'}(\vU')$ of vector spaces
$\vU$ and $\vU'$. We assume throughout this section that
\begin{equation}\label{eq:dim-voraus}
     1\leq \dim \vU<\infty\mbox{ and }1\leq \dim \vU'<\infty.
\end{equation}
The sets $\cG$ and $\cG'$ are the point sets of the corresponding Grassmann
spaces $\fM$ and $\fM'$, respectively. By (\ref{def:Psi}), we have bijections
$\Psi:\PP(R)\to\cG$ and $\Psi':\PP(R')\to\cG'$.
\end{nrtxt}

\begin{prop}\label{prop:iso.endo}
Let  $R=\End_K(\vU)$ and $R'=\End_{K'}(\vU')$, and  let $\phi:\PP(R)\to\PP(R')$
be a mapping. Then the following statements are equivalent:

\vspace{-0.9ex}\begin{enumerate}\itemsep0pt

\item\label{iso.endo.a} $\phi$ is a $\dis$-iso\-mor\-phism.

\item\label{iso.endo.b} $\phi$ is a $\sim$-iso\-mor\-phism.

\item\label{iso.endo.c} $\Psi^{-1}\phi\Psi':\cG\to\cG'$ is a collineation
$\fM\to\fM'$.
\end{enumerate}
\end{prop}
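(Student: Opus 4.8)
The plan is to prove a cycle of implications (a) $\Rightarrow$ (b) $\Rightarrow$ (c) $\Rightarrow$ (a). The relations $\dis$ and $\sim$ on $\PP(R)$ and $\PP(R')$ are intrinsically defined in terms of $\dis$ alone (see~\ref{para:parallel+adj} and (\ref{def:adjacent})), so any $\dis$-iso\-mor\-phism automatically preserves $\sim$ in both directions. Since $R$ and $R'$ are simple (their radicals vanish), parallelism is equality, so the side condition $r\notpar p,q$ in (\ref{def:adjacent}) is just $r\neq p,q$; this was already noted in the proof of Proposition~\ref{prop:adjazent}. Hence the implication (a) $\Rightarrow$ (b) is immediate once one observes that a bijective $\dis$-mor\-phism whose inverse is a $\dis$-mor\-phism transports the purely $\dis$-theoretic notion of adjacency faithfully.

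For (b) $\Rightarrow$ (c) I would use Proposition~\ref{prop:adjazent}~(\ref{adjazent.a}): via the bijections $\Psi$ and $\Psi'$, the relation $\sim$ on $\PP(R)$ corresponds to ``distinct and collinear'' in $\fM$, and likewise on the primed side. Therefore, writing $\Phi:=\Psi^{-1}\phi\Psi':\cG\to\cG'$, the fact that $\phi$ is a $\sim$-iso\-mor\-phism says exactly that $\Phi$ is a bijection preserving collinearity and non-collinearity of point pairs in both directions. What remains is to upgrade this ``adjacency preserving bijection'' to an honest collineation, i.e.\ to show it maps lines (pencils) of $\fM$ onto lines of $\fM'$. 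Here I would invoke Chow's theorem on the Grassmann graph, or rather the version needed: a bijection of $\cG$ onto $\cG'$ preserving adjacency in both directions carries maximal cliques to maximal cliques, and since each line of a Grassmann space is itself a maximal clique of a prescribed type, or is the intersection of two maximal cliques of the two different types (stars and tops), $\Phi$ maps lines to lines. Strictly, one must be a little careful when $\dim\vU$ or $\dim\vU'$ is small (e.g.\ $n=1$, where $\cG$ is a single point and $\fM$ has no lines): these degenerate cases should be checked directly and are trivial, since then $\PP(R)$ is a single point or a projective line over a field.

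The converse (c) $\Rightarrow$ (a) is the easy direction of the cycle. If $\Phi=\Psi^{-1}\phi\Psi'$ is a collineation $\fM\to\fM'$, then in particular it is an adjacency-preserving bijection in both directions, so by Proposition~\ref{prop:adjazent}~(\ref{adjazent.a}) $\phi$ is a $\sim$-iso\-mor\-phism. But that is not yet enough; we need $\phi$ to preserve $\dis$. Here I would use the description in~\ref{:adjacent}: $\Psi$ maps $\dis$ to complementarity of subspaces in $\cG$ and $\notdis$ to non-complementarity, and $\Psi'$ does the same on the primed side. A collineation of Grassmann spaces is induced (up to the finitely many small exceptions and a possible duality when $\dim\vU=2n$) by a semilinear bijection $\vU\times\vU\to\vU'\times\vU'$, hence it preserves the property ``$\dim(\vP\cap\vQ)=0$'' and its negation; so $\Phi$ maps complementary pairs to complementary pairs in both directions, and translating back through $\Psi,\Psi'$ shows $\phi$ is a $\dis$-iso\-mor\-phism.

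The main obstacle is the heart of (b) $\Rightarrow$ (c): passing from ``preserves adjacency (and non-adjacency) of pairs'' to ``maps lines to lines''. This is exactly the content of Chow-type theorems, and the paper's strategy is to reduce everything to such a result; in the self-contained write-up one either cites Chow's theorem \cite{chow-49} (and its refinements \cite{huang-98}, \cite{wan-96}) or argues combinatorially that lines are recoverable from the adjacency relation as the maximal cliques of one of the two families, intersected appropriately, being careful with the low-dimensional degeneracies and with the symmetry under the $\cG\leftrightarrow\cG$ duality in dimension $2n$. The other, purely bookkeeping, point to get right is that ``$\dis$-iso\-mor\-phism'' requires $\phi^{-1}$ to also be a $\dis$-mor\-phism, so throughout one must track preservation of both $\dis$ and $\notdis$ (equivalently, of both collinearity and non-collinearity, and of both complementarity and non-complementarity), not just one implication.
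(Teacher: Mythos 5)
Your proposal is correct in outline, but it follows a genuinely different cycle from the paper's. The paper omits a direct proof and obtains this proposition as the case $m=m'=1$ of Proposition~\ref{prop:iso.prod}, whose cycle is (\ref{iso.endo.a})~$\Rightarrow$~(\ref{iso.endo.c})~$\Rightarrow$~(\ref{iso.endo.b})~$\Rightarrow$~(\ref{iso.endo.a}): for (\ref{iso.endo.a})~$\Rightarrow$~(\ref{iso.endo.c}) it exploits the \emph{ternary} relation ``$p\sim q$ via $r$'', which by Proposition~\ref{prop:adjazent}~(\ref{adjazent.b}) encodes collinear triples, so a $\dis$-iso\-mor\-phism preserves collinear triples outright and no clique analysis is needed; and for (\ref{iso.endo.b})~$\Rightarrow$~(\ref{iso.endo.a}) it uses the distance formula (\ref{eq:dist.i}) in the adjacency graph, characterizing $\dis$ as ``maximal graph distance''. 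Your cycle (\ref{iso.endo.a})~$\Rightarrow$~(\ref{iso.endo.b})~$\Rightarrow$~(\ref{iso.endo.c})~$\Rightarrow$~(\ref{iso.endo.a}) instead has only the binary relation $\sim$ available at the step (\ref{iso.endo.b})~$\Rightarrow$~(\ref{iso.endo.c}), which forces you to recover lines from maximal cliques (stars and tops) \`a la Chow, and your step (\ref{iso.endo.c})~$\Rightarrow$~(\ref{iso.endo.a}) invokes the full fundamental theorem of Grassmann geometry to get preservation of complementarity. Both steps are valid, but they import heavy external machinery where the paper's argument is elementary and self-contained, and --- importantly for the paper's architecture --- your (\ref{iso.endo.c})~$\Rightarrow$~(\ref{iso.endo.a}) would not carry over to the Segre-product setting of Proposition~\ref{prop:iso.prod}, whereas the graph-distance argument does. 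Two small inaccuracies worth fixing: for $n=1$ the set $\cG$ is not a single point but the full projective line over $K$, and $\fM$ has exactly one line containing all of $\cG$ (the case is still trivial, but for that reason); and the duality in Chow's theorem is always a possibility here, since one always deals with $n$-dimensional subspaces of the $2n$-dimensional space $\vU\times\vU$.
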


\begin{txt}
We omit the proof, because this proposition is just that particular case of
Proposition~\ref{prop:iso.prod} where $m=m'=1$.
\end{txt}

\begin{nrtxt}
In the following theorem we use the terminology of \ref{:beispiele}.
Moreover, we introduce the following notations and conventions: The dual space
of $\vU$, which is a right vector space over $K$, is denoted by $\widehat \vU$.
We use the symbol $\langle\cdot,\cdot\rangle$ for the canonical pairing
$\vU\times\widehat\vU\to K$. For each $a\in\End_K(\vU)$ the transpose mapping
is written as $a^\T$; it is an element of $\End_K(\widehat \vU)$. The elements
of $\widehat \vU\times \widehat \vU$ are considered as columns. Each such
column $(\widehat \vv,\widehat \vw)^\T$ acts as a linear form on $\vU\times
\vU$ via the formal matrix product
\begin{equation}
    (\vv,\vw)\mapsto (\vv,\vw)\Mat1{\!\!\widehat \vv\!\!\\ \!\!\widehat \vw \!\!}
    :=\langle \vv,\widehat\vv\rangle + \langle \vw,\widehat\vw\rangle.
\end{equation}
This allows us to identify the dual of $\vU\times \vU$ with $\widehat \vU\times
\widehat \vU$.
\end{nrtxt}

\begin{thm}\label{thm:alg.darst}
Let  $R=\End_K(\vU)$ and $R'=\End_{K'}(\vU')$. A bijection
$\phi:\PP(R)\to\PP(R')$ is a $\dis$-iso\-mor\-phism if, and only if, one of the
following conditions is satisfied:

\vspace{-0.9ex}\begin{enumerate}\itemsep0pt
 \item\label{alg.darst.a}
$\dim \vU=\dim \vU'=1$.

 \item\label{alg.darst.b}
 $\dim \vU>1$ and $\phi=\widetilde\alpha\widetilde\gamma$, where $\alpha:R\to R'$ is
either an isomorphism or an anti-iso\-mor\-phism, and $\gamma\in\GL_2(R')$.
\end{enumerate}\vspace{-0.9ex}
Moreover, for\/ $\dim \vU>1$, the mapping $\alpha$ can be written either as
$x\mapsto h^{-1}xh$ with a semilinear bijection $h:\vU\to \vU'$, or as
$x\mapsto h^{-1}x^\T h$ with a semilinear bijection $h:\widehat \vU\to \vU'$,
respectively.
\end{thm}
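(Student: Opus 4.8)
The strategy is to reduce everything to the geometric picture already set up: by Proposition~\ref{prop:iso.endo}, a bijection $\phi:\PP(R)\to\PP(R')$ is a $\dis$-isomorphism if and only if $\Psi^{-1}\phi\Psi':\cG\to\cG'$ is a collineation of the Grassmann spaces $\fM\to\fM'$. So the whole theorem becomes a statement about collineations between Grassmann spaces of the form $\cG=\{n\text{-dim.\ subspaces of }\vU\times\vU\}$ and $\cG'=\{n'\text{-dim.\ subspaces of }\vU'\times\vU'\}$, where $n=\dim\vU$, $n'=\dim\vU'$.

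First I would dispose of the degenerate cases. If $\dim\vU=1$ then $R=K$ is a field, $\PP(R)$ is the ordinary projective line, $\dis$ is just $\neq$, and any bijection to another projective line with at least three points on which $\dis$ is also $\neq$ is a $\dis$-isomorphism; this forces $\dim\vU'=1$ as well (otherwise $\PP(R')$ would carry a nontrivial distant relation, contradicting that $\phi^{-1}$ preserves non-distant points), giving~(a). Symmetrically $\dim\vU'=1$ forces $\dim\vU=1$. So from now on assume $n,n'\geq 2$; equivalently $\dim(\vU\times\vU)=2n\geq 4$ and likewise for the primed space.

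Now the core step: classify the collineations $\fM\to\fM'$. Here I would invoke the Chow-type theorem for Grassmann spaces (cited in the excerpt via \cite{chow-49}, \cite{huang-98}, \cite{wan-96}): for $2n\geq 4$ every collineation between the Grassmann space of $n$-subspaces of a $2n$-dimensional space and the Grassmann space of $n'$-subspaces of a $2n'$-dimensional space exists only if $n=n'$ (match the diameter/structure of the graph) and is induced either by a semilinear bijection $g:\vU\times\vU\to\vU'\times\vU'$, or — since here the ambient dimension $2n$ is exactly twice the subspace dimension $n$, so that taking a complement (or dually, passing to annihilators in the dual space) is again a self-map of $\cG$ — by a semilinear bijection $g:\vU\times\vU\to\widehat{\vU'}\times\widehat{\vU'}$ followed by the annihilator map. (The case $n=1$ is exactly the excluded field case; for $n\geq 2$ the only "exceptional" correlation-type collineation is this duality, which only lands inside $\cG'$ because $2n=2n'$.) Translate this back through $\Psi,\Psi'$: the first kind gives a map $\PP(R)\to\PP(R')$ that is, up to the induced group isomorphism $\GL_2(R)\cong\Aut_K(\vU\times\vU)$, of the form "apply a ring isomorphism $R\to R'$ then a projectivity", i.e.\ $\widetilde\alpha\widetilde\gamma$ with $\alpha$ an isomorphism; the second kind, because transposition $a\mapsto a^\T$ realizes the anti-isomorphism $\End_K(\vU)\to\End_K(\widehat\vU)$ and this anti-isomorphism corresponds geometrically to the annihilator duality, gives $\widetilde\alpha\widetilde\gamma$ with $\alpha$ an anti-isomorphism. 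That yields~(b).

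Finally, for the "moreover" clause I would use the classical description of (anti-)isomorphisms of endomorphism rings of finite-dimensional vector spaces (a standard consequence of the fundamental theorem of projective geometry / the Skolem–Noether-type rigidity for $\End_K(\vU)$): every ring isomorphism $R=\End_K(\vU)\to R'=\End_{K'}(\vU')$ is inner-by-semilinear, i.e.\ $x\mapsto h^{-1}xh$ for a semilinear bijection $h:\vU\to\vU'$, and every anti-isomorphism is obtained by composing such an isomorphism with the transpose map $\End_K(\vU)\to\End_K(\widehat\vU)$, hence has the form $x\mapsto h^{-1}x^\T h$ for a semilinear bijection $h:\widehat\vU\to\vU'$. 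Conversely every map of either form is a $\dis$-isomorphism by the constructions in~\ref{:beispiele}(b),(c) together with~(a) there, so both directions are covered.

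\textbf{Main obstacle.} The delicate point is the classification of Grassmann collineations and, in particular, pinning down exactly when the "correlation" branch occurs: it is precisely the coincidence $2n=2n'$ (ambient dimension twice the subspace dimension) that makes the annihilator map an automorphism of the point set $\cG$ rather than a map to a different Grassmannian, and one must make sure the cited Chow-type results are being applied in the right generality (arbitrary, possibly skew, fields $K,K'$ and possibly $K\not\cong K'$). One also has to be careful that a priori $n$ and $n'$ need not agree; ruling out $n\neq n'$ requires a genuine structural invariant of the Grassmann space (e.g.\ maximal singular subspaces come in two families of dimensions $n-1$ and $2n-n-1=n-1$... so here actually both families have the same dimension, which is itself a feature of the $2n$ case, and the right invariant to use is the diameter of the Grassmann graph, which is $\min(n,2n-n)=n$). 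Getting this bookkeeping exactly right, and matching it cleanly with the algebra via $\Psi$, is where the real work lies; the rest is assembling cited facts.
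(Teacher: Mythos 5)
Your proposal is correct and its architecture matches the paper's: dispose of the case $\dim\vU=1$, transport $\phi$ to the Grassmannian via $\Psi$, classify the induced bijection $\cG\to\cG'$ as induced by a semilinear map or by a semilinear map composed with the annihilator duality, and translate back to the form $\widetilde\alpha\widetilde\gamma$. The one genuine difference is the classification theorem you feed in: the paper does not pass through Proposition~\ref{prop:iso.endo} and adjacency at all in this proof; it applies \cite[Thm.~4.4]{blu+h-02z} directly to $\Psi^{-1}\phi\Psi'$, viewed as a bijection preserving \emph{complementarity} of $n$-subspaces of a $2n$-space in both directions (complementarity being exactly what $\Psi$ turns $\dis$ into), and that single citation already yields $\dim\vU=\dim\vU'$ together with the semilinear/dual-semilinear dichotomy. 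Your detour through collineations and Chow's theorem is legitimate and non-circular (Proposition~\ref{prop:iso.endo} is established independently, as the case $m=m'=1$ of Proposition~\ref{prop:iso.prod}), but it requires Chow's theorem for isomorphisms between Grassmann spaces over possibly different skew fields --- exactly the generality issue you flag, and the reason the paper leans on its companion result instead. Two smaller repairs: the two families of maximal singular subspaces both have projective dimension $n$, not $n-1$ (harmless, since you only need that they coincide before switching to the diameter argument); and your sentence asserting that the duality branch ``gives $\widetilde\alpha\widetilde\gamma$ with $\alpha$ an anti-isomorphism'' compresses the part where the paper actually works: one must compute the annihilator $\vU^{(a,b)\perp}$ explicitly --- it is parametrized by the second column $(v,w)^\T$ of $M^{-1}$ for an invertible matrix $M$ with first row $(a,b)$, which is precisely why the $\dis$-morphism attached to an anti-isomorphism in \ref{:beispiele}~(c) is defined via that column --- and without this computation the identification of the duality branch with $R(a,b)\mapsto R'(-w^\alpha,v^\alpha)^{\widetilde\gamma}$ is not justified.
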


\begin{proof}
Suppose that (\ref{alg.darst.a}) holds. Then $R\cong K$ and $R'\cong K'$ so
that $\phi$ is a $\dis$-iso\-mor\-phism. If $\phi$ is given as in
(\ref{alg.darst.b}) then it is a $\dis$-iso\-mor\-phism according to
\ref{:beispiele}.

Conversely, let $\phi$ be a $\dis$-iso\-mor\-phism. We read off from
\cite[Thm.~4.4]{blu+h-02z} that $\dim \vU=\dim \vU'$, whence we can restrict
ourselves to the case $\dim \vU>1$.

By \cite[Thm.~4.4~(c)]{blu+h-02z}, applied to $\Psi^{-1}\phi\Psi'$, there are
two mutually exclusive possibilities:

(i) There is a semilinear bijection $f:\vU\times \vU \to \vU'\times \vU'$
such that for all $\vX\in\cG$ the image of $\vX$ under ${\Psi^{-1}\phi\Psi'}$
equals $\vX^f$.

Choose any semilinear bijection $h:\vU\to \vU'$ with the same accompanying
isomorphism $K\to K'$ as $f$. Then the mapping $f$ can be written formally as
\begin{equation}
    (\vu_1,\vu_2)\stackrel{f}\longmapsto
    (\vu_1^h,\vu_2^h)\Mat2{g_{11} & g_{12}\\g_{21} & g_{22}},
\end{equation}
with a matrix $\gamma:=(g_{ij})\in\GL_2(R')$; cf.\
\cite[pp.~642--643]{lang-95}. This $\gamma$ is at the same time a linear
bijection of $\vU'\times\vU'$; cf.\ \ref{:adjacent}. Clearly, $\alpha: R\to R':
x\mapsto h^{-1}xh$ is an isomorphism of rings. A straightforward calculation
yields
\begin{equation}
    (\vU^{(a,b)})^f= (\vU'^{(a^\alpha,b^\alpha)})^\gamma
    \mbox{ for all } (a,b)\in R^2 \mbox{ with } R(a,b)\in\PP(R).
\end{equation}
Consequently, $R(a,b)^\phi = R(a,b)^{\widetilde\alpha\widetilde\gamma}$, as
required.

(ii) There is a semilinear bijection $f:\widehat \vU\times \widehat \vU \to
\vU'\times \vU'$ such that for all $\vX\in\cG$ the image of $\vX$ under
${\Psi^{-1}\phi\Psi'}$ equals $(\vX^\perp)^f$, where $\vX^\perp$ denotes the
annihilator of $\vX$.

Let $R(a,b)\in\PP(R)$ be a point. There are $c,d\in R$ such that $M:=\SMat2{a &
b\\c&d}$ is invertible. Let $(v,w)^\T$ be the second column of $M^{-1}$. We
claim that
\begin{equation}\label{eq:U_ortho}
\vU^{(a,b)\perp}=\left\{ \Mat1{\!\!\widehat\vu^{v^\T}\!\! \\
                                \!\!\widehat\vu^{w^\T}\!\!}
                          \mid \widehat\vu\in\widehat\vU\right\}.
\end{equation}
Observe that for all $(x,y)\in R^2$, for all $\vu\in\vU$, and all
$\widehat\vu\in\widehat\vU$ we have
\begin{equation}\label{eq:ortho=}
    (\vu^x,\vu^y)\Mat1{\!\!\widehat\vu^{v^\T}\!\!
    \\
    \!\!\widehat\vu^{w^\T}\!\!} =
    \langle \vu^x,\widehat\vu^{v^\T}\rangle +
    \langle \vu^y,\widehat\vu^{w^\T}\rangle
    =\langle \vu^{xv+yw},\widehat\vu\rangle.
\end{equation}
Let us write $\widehat\vY$ for the set on the right hand side of
(\ref{eq:U_ortho}). Equation (\ref{eq:ortho=}) for $(x,y):=(a,b)$ says that
$\vU^{(a,b)}\subseteq\widehat\vY{}^\perp$, since $av+bw=0$. Likewise,
(\ref{eq:ortho=}) for $(x,y):=(c,d)$ gives the reverse inclusion as follows: We
have $cv+dw=1$. Thus for each non-trivial linear form in $\widehat\vY$ there is
a non-zero vector of $\vU^{(c,d)}$ which is not in its kernel. This means that
$\widehat\vY{}^\perp$ is contained in some complement of $\vU^{(c,d)}$ and, by
the above, this complement has to be $\vU^{(a,b)}$. Altogether, we get
$\widehat\vY{}^\perp=\vU^{(a,b)}$ which is equivalent to (\ref{eq:U_ortho}).

Choose any semilinear bijection $h:\widehat\vU\to \vU'$ with the same
accompanying anti-iso\-mor\-phism $K\to K'$ as $f$. Then the mapping $f$ can
be written as
\begin{equation}\label{eq:anti}
    \Mat1{\!\!\widehat\vu_1\!\! \\ \!\!\widehat\vu_2\!\!}\stackrel{f}\longmapsto
    (-\widehat\vu_2^h,\widehat\vu_1^h)\Mat2{g_{11} & g_{12}\\g_{21} &
    g_{22}},
\end{equation}
with a matrix $\gamma:=(g_{ij})\in\GL_2(R')$. Clearly, $\alpha: R\to R':
x\mapsto h^{-1}x^\T h$ is an anti-iso\-mor\-phism of rings. Now it is
straightforward to show that $R(a,b)^\phi =
R'(-w^\alpha,v^\alpha)^{\widetilde\gamma}=R(a,b)^{\widetilde\alpha\widetilde\gamma}$.
\end{proof}

\begin{txt}
We refer to \cite[Thm.~5.1]{blu+h-00c} for a similar result for the particular
case that $\dim\vU=\dim\vU'=2$, where $\alpha$ is only assumed to be a
bijective $\dis$-mor\-phism. (The description based on formula (9) in that
theorem is erroneous; a correct version can be derived from (\ref{eq:anti})
above.)
\end{txt}

\begin{cor}\label{cor:jordan}
Let $R$ be the ring of $n\times n$ matrices over a field $K$, and let $R'$ be
the ring of $n'\times n'$ matrices over a field $K'$. Moreover, let $n>1$,
and let $\omega:R\to R'$  be a Jordan iso\-mor\-phism. Then the following
hold:

\vspace{-0.9ex}\begin{enumerate}\itemsep0pt

\item\label{jordan.a} $n=n'$ and $\omega$ is either an isomorphism or an
anti-iso\-mor\-phism.

\item\label{jordan.b} If  $\omega$ is an isomorphism, then there are an
isomorphism $\beta:K\to K'$ and a matrix $G\in R'^*$ such that
$X^\omega=G^{-1}X^\beta G$ holds for all $X\in R$.

\item\label{jordan.c} If  $\omega$ is an anti-iso\-mor\-phism, then there are
an anti-iso\-mor\-phism $\beta:K\to K'$ and a matrix $G\in R'^*$ such that
$X^\omega=G^{-1}(X^\beta)^\T G$ holds for all $X\in R$.
\end{enumerate} \vspace{-0.9ex}
\gerade(In cases \gerade{(\ref{jordan.b})} and \gerade{(\ref{jordan.c})} the
mapping $\beta $ acts on the entries of the matrix $X$.\gerade)
\end{cor}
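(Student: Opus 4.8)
The plan is to deduce Corollary~\ref{cor:jordan} from Theorem~\ref{thm:alg.darst} by passing from the abstract matrix rings to endomorphism rings of vector spaces. Fix an $n$-dimensional left vector space $\vU$ over $K$ and an $n'$-dimensional left vector space $\vU'$ over $K'$. Choosing bases identifies $R=\M_n(K)$ with $\End_K(\vU)$ and $R'=\M_{n'}(K')$ with $\End_{K'}(\vU')$; let $\iota:\M_n(K)\to\End_K(\vU)$ and $\iota':\M_{n'}(K')\to\End_{K'}(\vU')$ denote the resulting isomorphisms. First I would observe that a Jordan isomorphism $\omega:R\to R'$ produces a $\dis$-iso\-mor\-phism $\PP(\End_K(\vU))\to\PP(\End_{K'}(\vU'))$: indeed $\iota^{-1}\omega\,\iota':\End_K(\vU)\to\End_{K'}(\vU')$ is again a Jordan isomorphism, and by \ref{:beispiele}(d)—using that matrix rings over fields are semilocal, hence of stable rank $2$ by \ref{:semilokal}—the associated map $\widetilde{\iota^{-1}\omega\,\iota'}$ of \gerade{(\ref{eq:bart2})} is a $\dis$-mor\-phism; applying the same construction to the inverse Jordan isomorphism shows it is in fact a $\dis$-iso\-mor\-phism.

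Next I would invoke Theorem~\ref{thm:alg.darst} for this $\dis$-iso\-mor\-phism $\phi:=\widetilde{\iota^{-1}\omega\,\iota'}$. Since $n>1$, case \gerade{(\ref{alg.darst.a})} is impossible, so $\dim\vU=\dim\vU'$, giving $n=n'$; moreover the last clause of the theorem tells us that the ring isomorphism/anti-iso\-mor\-phism $\alpha$ occurring in $\phi=\widetilde\alpha\widetilde\gamma$ is of the form $x\mapsto h^{-1}xh$ for a semilinear bijection $h:\vU\to\vU'$, or of the form $x\mapsto h^{-1}x^\T h$ for a semilinear bijection $h:\widehat\vU\to\vU'$. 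The key step now is to compare $\phi$ with $\widetilde{\iota^{-1}\omega\,\iota'}$ at the level of the underlying ring maps. Both $\widetilde\alpha$ (for the $\alpha$ of the theorem) and $\widetilde{\iota^{-1}\omega\,\iota'}$ are built from Jordan isomorphisms of $\End_K(\vU)$ onto $\End_{K'}(\vU')$ via the Bartolone formula \gerade{(\ref{eq:bart2})}, and their product with the projectivity $\widetilde\gamma^{-1}$ agrees; since \gerade{(\ref{eq:bart1})} presents $\PP(R)$ by means of the elements $a,b\in R$ themselves, one reads off from the formula $\widetilde\beta:R(ab-1,a)\mapsto R'(a^\beta b^\beta-1,a^\beta)$ that a Jordan isomorphism $\beta$ is determined by its induced $\dis$-mor\-phism $\widetilde\beta$ (take $b=0$ to recover $a^\beta$ up to a unit, then use additivity and the Jordan identity to pin down $\beta$ exactly, using $1^\beta=1$). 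Hence $\iota^{-1}\omega\,\iota'$ and the $\alpha$ supplied by the theorem coincide, so $\omega=\iota\,\alpha\,\iota'^{-1}$.

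It remains to translate the semilinear description of $\alpha$ back into matrix language. If $\alpha:x\mapsto h^{-1}xh$ with $h:\vU\to\vU'$ semilinear with accompanying field isomorphism $\beta:K\to K'$, write $h$ as the composition of the coordinatewise field map $X\mapsto X^\beta$ (via the chosen bases) with the $K'$-linear bijection given by some matrix $G\in R'^*$; then $\omega$ sends a matrix $X$ to $G^{-1}X^\beta G$, which is \gerade{(\ref{jordan.b})}. Symmetrically, if $\alpha:x\mapsto h^{-1}x^\T h$ with $h:\widehat\vU\to\vU'$ semilinear with accompanying anti-iso\-mor\-phism $\beta:K\to K'$, the transpose on $\End_K(\vU)$ corresponds under the bases to the matrix transpose (after applying $\beta$ entrywise), and absorbing the remaining linear bijection of $\vU'$ into a matrix $G\in R'^*$ yields $X^\omega=G^{-1}(X^\beta)^\T G$, which is \gerade{(\ref{jordan.c})}; in particular $\omega$ is an isomorphism in the first case and an anti-iso\-mor\-phism in the second, establishing \gerade{(\ref{jordan.a})}. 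I expect the main obstacle to be the faithfulness argument in the previous paragraph—checking carefully that the $\dis$-mor\-phism attached to a Jordan isomorphism by \gerade{(\ref{eq:bart2})} determines that Jordan isomorphism uniquely, so that the $\alpha$ produced by Theorem~\ref{thm:alg.darst} really is $\iota^{-1}\omega\,\iota'$ rather than merely inducing the same map up to some ambiguity; everything else is bookkeeping with bases and semilinear maps.
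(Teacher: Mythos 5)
Your overall route is the paper's: induce a $\dis$-isomorphism from $\omega$ via Bartolone's formula (\ref{eq:bart2}), apply Theorem~\ref{thm:alg.darst}, and translate back into matrix language. But the step you yourself flag as the main obstacle is a genuine gap. Theorem~\ref{thm:alg.darst} only yields $\widetilde\omega=\widetilde\alpha\widetilde\gamma$ with an \emph{unknown} $\gamma\in\GL_2(R')$; your faithfulness observation (that a Jordan isomorphism is determined by the map in (\ref{eq:bart2})) would let you conclude $\iota^{-1}\omega\,\iota'=\alpha$ only if $\widetilde\gamma$ were the identity, which is not known and is false in general. Indeed, for $R=R'$ take $\omega$ to be conjugation by a non-central $G_0\in R'^*$: the theorem may return $\alpha=\id$ and $\gamma=\diag(G_0,G_0)$, so $\omega$ and $\alpha$ differ by the inner automorphism encoded in $\gamma$. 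Saying that ``their product with the projectivity $\widetilde\gamma^{-1}$ agrees'' merely restates $\widetilde\omega\widetilde\gamma^{-1}=\widetilde\alpha$ and gives no control over $\gamma$; your subsequent computation of $X^\omega$ then silently discards the contribution of $\widetilde\gamma$, so the identity $\omega=\iota\,\alpha\,\iota'^{-1}$, and with it your derivation of (\ref{jordan.b}), (\ref{jordan.c}) and strictly also (\ref{jordan.a}), is not established.

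The missing ingredient is exactly how the paper pins down $\gamma$. Putting $a=1$, $b=X+1$ in (\ref{eq:bart2}) gives $R(X,1)^{\widetilde\omega}=R'(X^\omega,1)$; since $1^\omega=1$ and $\omega$ is additive, $\widetilde\omega$ maps $R(1,0)$, $R(0,1)$, $R(1,1)$ to $R'(1,0)$, $R'(0,1)$, $R'(1,1)$. Choosing $h$ with matrix $1$ (so that $\alpha$ is the entrywise field map $\beta$, rather than splitting $h$ into $\beta$ followed by some $G$ as you do), the same holds for $\widetilde\alpha$, whence $\widetilde\gamma$ fixes these three points of $\PP(R')$ and therefore $\gamma=\diag(G,G)$ for some $G\in R'^*$. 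Evaluating $\widetilde\alpha\widetilde\gamma$ at $R(X,1)$ yields $R'(X^\beta G,G)=R'(G^{-1}X^\beta G,1)$, hence $X^\omega=G^{-1}X^\beta G$; in the anti-isomorphism case one computes $R(X,1)^{\widetilde\alpha}$ via \ref{:beispiele}~(c) using $M=\SMat2{X&1\\1&0}$ and obtains $X^\omega=G^{-1}(X^\beta)^\T G$ analogously. Your final formulas happen to have the right shape because composing with an inner automorphism preserves that shape, but without this base-point argument controlling $\gamma$ your proof does not reach them.
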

\begin{proof}
Choose any matrix $X\in R$. By (\ref{eq:bart2}), the Jordan isomorphism
$\omega$ induces a $\dis$-iso\-mor\-phism
$\widetilde\omega:\PP(R)\to\PP(R')$. Putting $a=1\in R$ and $b=X+1\in R$ in
(\ref{eq:bart2}) gives
\begin{equation}\label{eq:sigma}
    R(X,1)^{\widetilde\omega}=R'(X^\omega,1).
\end{equation}
We repeat the proof of Theorem~\ref{thm:alg.darst} for $\phi=\widetilde\omega$,
$\vU=K^n$, $\vU\times\vU=K^{2n}$ etc. This gives $n=n'>1$ and there are
accordingly two cases:

In case (i) let $\beta : K\to K'$ be the accompanying automorphism of $f$. We
may choose $h$ in such a way that its matrix is $1\in R'$. Hence $X^\alpha =
X^\beta$, where $\beta$ acts on the entries of $X$. Since $\widetilde\alpha$
and $\widetilde\omega$ map $R(1,0)$, $R(0,1)$, and $R(1,1)$ to $R'(1,0)$,
$R'(0,1)$, and $R'(1,1)$, respectively, we conclude that $\gamma=\diag(G,G)$
for a matrix $G\in R'^*$. This gives
\begin{equation}\label{eq:alpha-gamma}
 R(X,1)^{\widetilde\omega}=R(X,1)^{\widetilde\alpha\widetilde\gamma}=R'(X^\beta G,G)=R'(G^{-1}X^\beta G,1).
\end{equation}
From (\ref{eq:sigma}) and (\ref{eq:alpha-gamma}) follows
$X^\omega=G^{-1}X^\beta G$. So $\omega$ is an isomorphism of rings.

In case (ii) it can be shown similarly that $\omega$ is an anti-iso\-mor\-phism
with the required properties: In order to calculate $R(X,1)^{\widetilde\alpha}$
according to \ref{:beispiele}~(c) one may use the matrix $M:=\SMat2{X &1\\1&0}$
so that $M^{-1}=\SMat2{0 & 1\\  1&-X}$.
\end{proof}

\begin{txt}
The statement of Corollary~\ref{cor:jordan} is well known: See
\cite[Thm.~3.24]{wan-96}, where Jordan isomorphisms are called
\emph{semi-iso\-mor\-phisms\/} instead. Of course, the assertion in
(\ref{jordan.a}) is also valid when $n=1$, provided that the word ``either'' is
deleted from the text: In fact, then $n'=1$, too, and we have a Jordan
isomorphism of fields, which is an isomorphism or anti-iso\-mor\-phism by Hua's
theorem. See \cite[Thm.~2.25]{wan-96}.
\end{txt}

\section{Direct products.}\label{se:produkte}

\begin{nrtxt}
Now we consider direct products of rings and the associated projective lines.
Let $R=\prod_{i\in I} R_i$ be the direct product of a family $(R_i)_{i\in I}$
of rings, where it is tacitly assumed throughout this section that $I$ is
non-empty. Then $\GL_2(R)\cong \prod_{i\in I}\GL_2(R_i)$, whence the points
of $\PP(R)$ are exactly the submodules $R\big((a_i)_{i\in I},(b_i)_{i\in
I}\big)\le R^2$ with $R_i(a_i,b_i)\in\PP(R_i)$ for all $i\in I$. So we can
identify each $p\in\PP(R)$ with a family  $(p_i)_{i\in I}$ in the direct
product (cartesian product) $\prod_{i\in I}\PP(R_i)$ of the projective lines
$\PP(R_i)$, and vice versa.

As a general rule, we adopt the following notation: Given an $a\in R$ and an
index $j\in I$ we write $a_j$ for the component of $a$ in $R_j$, i.e., we
assume that $a=(a_i)_{i\in I}$. The same kind of notation is used whenever
applicable, e.g. for points $p\in\PP(R)$. Observe that, as before, we use the
same symbols ($\dis$, $\parallel$, $\sim$) for the corresponding relations on
all our projective lines.
\end{nrtxt}

\begin{prop}\label{prop:rel.prod}
Let $R=\prod_{i\in I} R_i$. Identify $\PP(R)$ with $ \prod_{i\in I}\PP(R_i)$ as
above. Then the following statements hold for all $p$, $q$, and $r\in
\PP(R)$\gerade:

\vspace{-0.9ex}\begin{enumerate}\itemsep0pt
 \item\label{rel.prod.a}
$p\dis q$ $\Leftrightarrow $ $\forall\,i\in I: p_i\dis  q_i$.
 \item\label{rel.prod.b}
$p\parallel q$ $\Leftrightarrow $ $\forall\,i\in I: p_i\parallel q_i$.
 \item\label{rel.prod.c}
$p\sim q$ via $r$ $\Leftrightarrow $ $\exists\, j\in I: \big(p_j\sim q_j$ via
$r_j$ and $\forall\,i\in I\setminus \{j\}: p_i\parallel q_i\parallel r_i$\big).
\end{enumerate}
\end{prop}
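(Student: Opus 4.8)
The plan is to prove all three equivalences by reducing the relations on $\PP(R)$ to the component relations via the identification $\PP(R)\cong\prod_{i\in I}\PP(R_i)$. For part (\ref{rel.prod.a}) I would argue directly from the definition of $\dis$: the matrix $\SMat2{a&b\\c&d}\in\GL_2(R)$ with $p=R(a,b)$, $q=R(c,d)$ is invertible if and only if each of its components $\SMat2{a_i&b_i\\c_i&d_i}\in\GL_2(R_i)$ is invertible, since $\GL_2(R)\cong\prod_{i\in I}\GL_2(R_i)$; hence $p\dis q$ iff $p_i\dis q_i$ for all $i$. Part (\ref{rel.prod.b}) then follows from (\ref{rel.prod.a}) once I describe the neighbourhood $\dis(p)$ componentwise. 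Indeed, by (\ref{rel.prod.a}) a point $s=(s_i)_{i\in I}$ lies in $\dis(p)$ iff $s_i\in\dis(p_i)$ for all $i$, so $\dis(p)=\prod_{i\in I}\dis(p_i)$ under the identification. From this, $\dis(p)\subseteq\dis(q)$ holds iff $\dis(p_i)\subseteq\dis(q_i)$ for every $i$ (using that each factor $\dis(q_i)$ is non-empty, which holds since $\PP(R_i)$ always contains a point distant to $q_i$), i.e.\ $p\parallel q$ iff $p_i\parallel q_i$ for all $i$.

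For part (\ref{rel.prod.c}) I would again exploit $\dis(p)=\prod_{i\in I}\dis(p_i)$ together with the fact that each $\dis(p_i)\ne\emptyset$. First, $r\notpar p,q$ means $r\notpar p$ and $r\notpar q$; by (\ref{rel.prod.b}) and the contrapositive, $r\notpar p$ iff there exists $i$ with $r_i\notpar p_i$ — but I need the stronger bookkeeping that will come out of the inclusion of neighbourhoods. The core step is to analyze $\dis(r)\subseteq\dis(p)\cup\dis(q)$, i.e.\ $\prod_i\dis(r_i)\subseteq\big(\prod_i\dis(p_i)\big)\cup\big(\prod_i\dis(q_i)\big)$. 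For the ``$\Rightarrow$'' direction, suppose $p\sim q$ via $r$. I claim there is exactly one index $j$ where $r_j\notpar p_j$ or $r_j\notpar q_j$, and that $p_j\sim q_j$ via $r_j$ there, while $p_i\parallel q_i\parallel r_i$ for $i\ne j$. To see the existence of $j$: if $r_i\parallel p_i\parallel q_i$ for \emph{all} $i$, then $r\parallel p$ (by (\ref{rel.prod.b})), contradicting $r\notpar p$; so some index is "bad". To see uniqueness and that everything else is parallel: if $i_0$ is an index with, say, $r_{i_0}\notpar p_{i_0}$, pick $s_{i_0}\in\dis(r_{i_0})\setminus\dis(p_{i_0})$; for $i\ne i_0$ pick any $s_i\in\dis(r_i)$, which is possible as that set is non-empty. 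Then $s=(s_i)_{i\in I}\in\dis(r)$, so $s\in\dis(p)\cup\dis(q)$; since $s_{i_0}\notin\dis(p_{i_0})$, we have $s\notin\dis(p)$, hence $s\in\dis(q)$, which forces $s_i\in\dis(q_i)$ for all $i$ — in particular $s_{i_0}\in\dis(q_{i_0})$. Running this argument with arbitrary choices of the $s_i$ for $i\ne i_0$ shows $\dis(r_i)\subseteq\dis(q_i)$ for all $i\ne i_0$, i.e.\ $r_i\parallel q_i$ for $i\ne i_0$; symmetrically (using $\dis(r_{i_0})\setminus\dis(q_{i_0})$, if non-empty) one gets $r_i\parallel p_i$ for $i\ne i_0$, and a short separate check handles the degenerate sub-cases. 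Consequently there is only one bad index $j:=i_0$, and for $i\ne j$ we get $r_i\parallel p_i$ and $r_i\parallel q_i$, hence $p_i\parallel q_i\parallel r_i$ by transitivity of $\parallel$. It remains to verify $p_j\sim q_j$ via $r_j$: from $r\notpar p,q$ and $r_i\parallel p_i,q_i$ for $i\ne j$ we get $r_j\notpar p_j,q_j$; and $\dis(r)\subseteq\dis(p)\cup\dis(q)$ projects, in the $j$-th coordinate (freezing the other coordinates at distant choices, which is legitimate since those sets are non-empty and the corresponding factors of $\dis(p),\dis(q)$ agree with those of $\dis(r)$), to $\dis(r_j)\subseteq\dis(p_j)\cup\dis(q_j)$.

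For the ``$\Leftarrow$'' direction of (\ref{rel.prod.c}), assume there is $j$ with $p_j\sim q_j$ via $r_j$ and $p_i\parallel q_i\parallel r_i$ for $i\ne j$. Then $r_j\notpar p_j,q_j$ shows $r\notpar p,q$. For the inclusion: take $s\in\dis(r)$, so $s_i\in\dis(r_i)$ for all $i$. For $i\ne j$, $r_i\parallel p_i$ gives $\dis(r_i)\subseteq\dis(p_i)$, so $s_i\in\dis(p_i)$, and likewise $s_i\in\dis(q_i)$. For $i=j$, $s_j\in\dis(r_j)\subseteq\dis(p_j)\cup\dis(q_j)$, so $s_j\in\dis(p_j)$ or $s_j\in\dis(q_j)$; in the first case $s\in\dis(p)$, in the second $s\in\dis(q)$. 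Either way $s\in\dis(p)\cup\dis(q)$, so $\dis(r)\subseteq\dis(p)\cup\dis(q)$ and hence $p\sim q$ via $r$.

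The main obstacle, and the place that needs the most care, is the ``$\Rightarrow$'' direction of (\ref{rel.prod.c}): establishing that the "bad" index $j$ is unique and that \emph{all} other coordinates of $p$, $q$, and $r$ are mutually parallel. The key enabling fact throughout — used repeatedly and silently above — is that $\dis(p_i)\ne\emptyset$ for every $i$ (equivalently, every $\PP(R_i)$ has at least two points, indeed at least three), which lets one build a global element of $\dis(r)$ with prescribed behaviour in a single coordinate while leaving the others free; this is what converts "$\prod_i A_i\subseteq\prod_i B_i\cup\prod_i C_i$" into coordinatewise information. I would state this non-emptiness once at the start of the proof and then carry out the four inclusion arguments, keeping an eye on the degenerate sub-cases (e.g.\ when $\dis(r_{i_0})\subseteq\dis(p_{i_0})$, so that the witness $s_{i_0}$ must instead be drawn from $\dis(r_{i_0})\setminus\dis(q_{i_0})$, which is non-empty precisely because $r_{i_0}\notpar q_{i_0}$ holds in that sub-case — and $r_{i_0}\notpar p_{i_0}$ or $r_{i_0}\notpar q_{i_0}$ must hold since $i_0$ was chosen "bad").
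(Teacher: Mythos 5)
Your proposal is correct and follows essentially the same route as the paper: part (\ref{rel.prod.a}) from $\GL_2(R)\cong\prod_{i\in I}\GL_2(R_i)$, part (\ref{rel.prod.b}) as an immediate consequence, and for part (\ref{rel.prod.c}) the construction of a global point distant to $r$ with prescribed behaviour in one coordinate and arbitrary (but distant) behaviour elsewhere, which forces all coordinates off the single ``bad'' index $j$ to be mutually parallel. Your explicit worry about the degenerate sub-case resolves exactly as in the paper: once $r_i\parallel q_i$ is known for all $i\neq j$, the hypothesis $r\notpar q$ together with (\ref{rel.prod.b}) forces $r_j\notpar q_j$, so the symmetric witness always exists.
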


\begin{proof}
(\ref{rel.prod.a}): This is clear from $\GL_2(R)\cong \prod_{i\in
I}\GL_2(R_i)$.

(\ref{rel.prod.b}): This is immediate from (\ref{rel.prod.a}) and the
definition of parallel points.

(\ref{rel.prod.c}): Let $p\sim q$ via $r$. Since $r\notpar p$, by
(\ref{rel.prod.b}) there is at least one $j\in I$ such that $r_j\notpar p_j$.
This means that there is a point $y_j\in\PP(R_j)$ with  $y_j\dis r_j$ and
$y_j\notdis p_j$.

We show first that $r_i\parallel q_i$ if $i\neq j$:

For each $i\neq j$ consider an arbitrary point $y_i\in\PP(R_i)$ such that
$y_i\dis r_i$; at least one such $y_i$ exists. Then $y:=(y_i)_{i\in I}\dis r$,
whence $y\dis p$ or $y\dis q$ must hold by assumption. But $y\dis p$ is
impossible since $y_j\notdis p_j$. So $y\dis q$, and in particular $y_i\dis
q_i$ for each $i\neq j$. Altogether $r_i\parallel q_i$ for $i\neq j$, as
desired.

Since $r\notpar q$, we conclude from (\ref{rel.prod.b}) and the above that
$r_j\notpar q_j$ must hold. As before, we can now infer that $r_i\parallel p_i$
for $i\neq j$.

It remains to show, for all $x_j\in\PP(R_j)$, that $x_j\dis  r_j$ implies
$x_j\dis p_j$ or $x_j\dis q_j$. For each $i\neq j$ choose an $x_i\in\PP(R_i)$
with $x_i\dis r_i$. Then $x:=(x_i)_{i\in I}\dis r$, and (\ref{rel.prod.a})
gives the assertion.

Conversely, let $p_j\sim q_j$ via $r_j$ for a $j\in I$ and $p_i\parallel
q_i\parallel r_i$ for all $i\neq j$. By (\ref{rel.prod.b}), we have $r\notpar
p,q$. Let $x\dis r$. Then (\ref{rel.prod.a}) and the assumption yield $x_j\dis
p_j$ or $x_j\dis q_j$. On the other hand, for $i\neq j$, we get from
(\ref{rel.prod.a}) that $x_i\dis p_i$, since $r_i\parallel p_i$, and likewise
$x_i\dis  q_i$, since $r_i\parallel q_i$. So $x\dis p$ or $x\dis q$.
\end{proof}

\begin{txt}
Note that statement (\ref{rel.prod.b}) reflects the algebraic fact
$\rad\left(\prod_{i\in I} R_i\right)=\prod_{i\in I} \rad R_i$ about Jacobson
radicals; cf.\ \ref{para:parallel+adj}.
\end{txt}

\begin{nrtxt}
Let now $(\fM_i)_{i\in I}$ be a family of partial linear spaces
$\fM_i=(\cP_i,\cL_i)$.  The \emph{direct product\/} (or \emph{Segre product\/})
of the $\fM_i$ is the partial linear space $\fM:=\prod_{i\in
I}\fM_i:=(\cP,\cL)$ with point set $\cP:=\prod_{i\in I} \cP_i$ and line set
$\cL:=\bigcup_{i\in I}\cL_{(i)}$, where for each $j\in I$ we define
\begin{equation}\label{def:prod.gerade}
\cL_{(j)}:=  \big\{
             \{x\in\cP\mid x_j\in L_j \wedge
                           \forall\,i\in I\setminus\{j\}: x_i=p_i \}
             \mid L_j\in\cL_j , p\in \cP \big\}.
\end{equation}
See \cite{nau+p-01}. Note that in this definition of a line, the point
$p_j\in\PP(R_j)$ is irrelevant.
\end{nrtxt}

\begin{nrtxt}
Let $R=\prod_{i\in I} R_i$, where each $R_i$ is an endomorphism ring
$\End_{K_i}(\vU_{i})$ of a left vector space over a field $K_i$,
$1\leq\dim\vU_{i}<\infty.$ Let $\fM_i=(\cG_i,\cL_i)$ be the Grassmann space
associated to $\PP(R_i)$, and let $\Psi_i:\PP(R_i)\to\cG_i$ be given as in
(\ref{def:Psi}). We call
\begin{equation}
   \fM:=(\cG,\cL):=\prod_{i\in I}\fM_i.
\end{equation}
the \emph{product space\/} associated to $\PP(R)$. Then, generalizing
(\ref{def:Psi}), the mapping
\begin{equation}\label{eq:prod.Psi}
    \Psi : \PP(R)\to \cG :
    p \mapsto (p_i^{\Psi_i})_{i\in I}
\end{equation}
is a bijection. According to our notational setting we have the trivial
identity $(p^\Psi)_i=p_i^{\Psi_i}$ for all $i\in I$. We are now in a position
to generalize Proposition~\ref{prop:adjazent} as follows:
\end{nrtxt}

\begin{prop}\label{prop:adj.prod}
Let $R=\prod_{i\in I} R_i$, where $R_i=\End_{K_i}(\vU_{i})$ as above. Let $\fM$
be the product space associated to $\PP(R)$. Then the following statements hold
for all $p$, $q$, and $r\in\PP(R)$, where $\Psi$ is given as in
\gerade{(\ref{eq:prod.Psi}):}

\vspace{-0.9ex}\begin{enumerate}\itemsep0pt
 \item\label{adj.prod.a}
    $p\sim q  \Leftrightarrow p^\Psi$ and
    $q^\Psi$  are distinct collinear points of $\fM$.

 \item\label{adj.prod.b}
$p\sim q$  via $r \Leftrightarrow p^\Psi$, $q^\Psi$, and $r^\Psi$ are distinct
collinear points of $\fM$.
\end{enumerate}
\end{prop}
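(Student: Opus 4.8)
The statement is a direct translation, via the bijection $\Psi$, of the combinatorial characterisation of adjacency in a Segre product. The strategy is to combine three ingredients that are already available: Proposition~\ref{prop:rel.prod}~(\ref{rel.prod.c}), which expresses $p\sim q$ via $r$ in $\PP(R)$ componentwise; Proposition~\ref{prop:adjazent}, which handles each factor $\PP(R_i)$ versus the Grassmann space $\fM_i$; and the definition of lines in a Segre product given in~(\ref{def:prod.gerade}). Since (\ref{adj.prod.b}) clearly implies (\ref{adj.prod.a}) (take $r$ to be any third point on the common line — such a line has at least three points because each $\fM_i$ does), it suffices to prove (\ref{adj.prod.b}), and (\ref{adj.prod.a}) will follow with little extra work.

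\textbf{Key steps.} First I would unwind the left-hand side of (\ref{adj.prod.b}) using Proposition~\ref{prop:rel.prod}~(\ref{rel.prod.c}): $p\sim q$ via $r$ means there is an index $j\in I$ with $p_j\sim q_j$ via $r_j$ in $\PP(R_j)$, and $p_i\parallel q_i\parallel r_i$ for all $i\neq j$. Since each $R_i=\End_{K_i}(\vU_i)$ is simple, $\rad R_i=\{0\}$, so by~(\ref{eq:parallel}) the parallelism on $\PP(R_i)$ is just equality; hence the condition becomes $p_i=q_i=r_i$ for $i\neq j$, i.e.\ (applying $\Psi$) $p^\Psi_i=q^\Psi_i=r^\Psi_i$ for $i\neq j$. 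By Proposition~\ref{prop:adjazent}~(\ref{adjazent.b}), the condition $p_j\sim q_j$ via $r_j$ is equivalent to $p^\Psi_j,q^\Psi_j,r^\Psi_j$ being three distinct collinear points of $\fM_j$, say lying on a pencil $L_j\in\cL_j$. Now I would check that these two conditions together say exactly that $p^\Psi,q^\Psi,r^\Psi$ are three distinct collinear points of $\fM=\prod_i\fM_i$: the line through them must be the member of $\cL_{(j)}$ determined by $L_j$ and by the common value of the remaining coordinates (cf.~(\ref{def:prod.gerade})), since any line of $\fM$ lies in a single $\cL_{(k)}$ and varies only in its $k$-th coordinate while three points on it have at least two distinct $k$-th coordinates — forcing $k=j$. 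Distinctness of the three points in $\fM$ is equivalent to distinctness of their $j$-th coordinates (the others agree), which matches the distinctness in $\fM_j$. Running these equivalences in reverse gives the converse direction, completing (\ref{adj.prod.b}); then for (\ref{adj.prod.a}) one direction is immediate from (\ref{adj.prod.b}) and the other uses that a line of $\fM$ has a third point, which can serve as the required $r$.

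\textbf{Main obstacle.} There is no deep difficulty here; the proof is essentially bookkeeping. The one point that needs care is the structural remark that \emph{every} line of the Segre product $\fM$ belongs to exactly one $\cL_{(k)}$ and that, on such a line, all coordinates except the $k$-th are constant — this is what lets us conclude that the index $j$ produced by Proposition~\ref{prop:rel.prod}~(\ref{rel.prod.c}) is the same index selecting the component $\cL_{(j)}$ containing the line, and it also guarantees uniqueness of the line through two given collinear points (needed since $\fM$ is a partial linear space). I would make this explicit as a short preliminary observation, or cite the relevant statement from~\cite{nau+p-01}, and the rest follows by assembling the equivalences above.
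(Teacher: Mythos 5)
Your proposal is correct and follows essentially the same route as the paper's proof: reduce (\ref{adj.prod.b}) to (\ref{adj.prod.a}) via the three-point property of lines, translate the left-hand side componentwise by Proposition~\ref{prop:rel.prod}~(\ref{rel.prod.c}) with parallelism collapsing to equality in the simple factors, apply Proposition~\ref{prop:adjazent}~(\ref{adjazent.b}) in the distinguished component, and match the result against the definition~(\ref{def:prod.gerade}) of lines in the Segre product. The only difference is that you make explicit the (easy) structural fact about lines of $\fM$ lying in a single $\cL_{(k)}$, which the paper leaves implicit.
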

\begin{proof}
It suffices to prove (\ref{adj.prod.b}), since every line of $\fM$ contains at
least three points. By Proposition~\ref{prop:rel.prod} (\ref{rel.prod.c}), we
have that $p\sim q$ via $r$ is equivalent to
\begin{equation}
\exists\, j\in I: \big(p_j\sim q_j \mbox{ via } r_j \mbox{ and } \forall\,i\in
I\setminus \{j\}: p_i= q_i=r_i\big).
\end{equation}
(Recall that here~$\parallel$ is equality.) By Proposition~\ref{prop:adjazent}
(\ref{adjazent.b}), this is equivalent to
\begin{equation}
\exists\, j\in I: \left\{\begin{array}{l}
  (p^\Psi)_j, (q^\Psi)_j, (r^\Psi)_j
  \mbox{ are distinct collinear points of }\fM_j
  \\
  \mbox{and }\forall\,i\in I\setminus \{j\}: (p^\Psi)_i=(q^\Psi)_i=(r^\Psi)_i.
\end{array}\right.
\end{equation}
Finally, by (\ref{def:prod.gerade}), this is means that $p^\Psi$, $q^\Psi$, and
$r^\Psi$ are distinct collinear points of $\fM$.
\end{proof}

\begin{txt}
In the following results we confine ourselves to finite (non-empty) products.
In case of an infinite product, the space $\fM$ is not connected, and our
proofs for Proposition~\ref{prop:iso.prod}, Theorem~\ref{thm:alg.darst.prod},
and Theorem~\ref{thm:NP} are not applicable. So the rings in the following
proposition and the subsequent theorem are exactly the semisimple rings.
\end{txt}

\begin{prop}\label{prop:iso.prod}
Let $R=\prod_{i=1}^mR_i$ and $R'=\prod_{j=1}^{m'}R'_j$ with
$R_i=\End_{K_i}(\vU_{i})$, $R_j'=\End_{K'_j}(\vU'_{j})$, where $1\leq
\dim\vU_{i},\;\dim\vU'_{j}<\infty$. Let $\fM$ and $\fM'$ be the associated
product spaces and suppose that $\Psi$ and $\Psi'$ are given according to
\gerade{(\ref{eq:prod.Psi})}. For a mapping $\phi:\PP(R)\to\PP(R')$, the
following statements are equivalent:

\vspace{-0.9ex}
\begin{enumerate}\itemsep0pt

\item\label{iso.prod.a} $\phi$ is a $\dis$-iso\-mor\-phism.

\item\label{iso.prod.b} $\phi$ is a $\sim$-iso\-mor\-phism.

\item\label{iso.prod.c} $\Psi^{-1}\phi\Psi':\cG\to\cG'$ is a collineation
$\fM\to\fM'$.
\end{enumerate}
\end{prop}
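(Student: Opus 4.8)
The plan is to prove the three equivalences by establishing the cycle
(\ref{iso.prod.a})$\Rightarrow$(\ref{iso.prod.b})$\Rightarrow$(\ref{iso.prod.c})$\Rightarrow$(\ref{iso.prod.a}),
with Proposition~\ref{prop:adj.prod} doing essentially all of the work of translating between the combinatorial relations on $\PP(R)$ and incidence in the product space $\fM$. This proposition, unlike its predecessors, really needs the cycle rather than two separate implications, because a $\dis$-iso\-mor\-phism and a collineation are both ``two-sided'' notions, so no single implication captures the whole statement cheaply.

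First I would treat (\ref{iso.prod.a})$\Rightarrow$(\ref{iso.prod.b}). Here the point is that adjacency is definable purely from the distant relation via (\ref{def:adjacent}), and moreover in the present semisimple situation parallelism reduces to equality (each $R_i$ is simple, so $\rad R_i=\{0\}$ and $\rad R=\{0\}$); hence $r\notpar p,q$ just means $r\neq p,q$. A bijection $\phi$ which together with $\phi^{-1}$ preserves $\dis$ therefore automatically preserves the property ``$p\sim q$ via $r$'' in both directions: one simply transports the witness $r$ along $\phi$ and uses that $\phi$ preserves $\dis(x)$ setwise (because it preserves $\dis$ and is bijective), so it preserves inclusions $\dis(r)\subseteq\dis(p)\cup\dis(q)$. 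Since $\sim$ is obtained by existentially quantifying over such witnesses, $\phi$ is a $\sim$-iso\-mor\-phism. Next, (\ref{iso.prod.b})$\Rightarrow$(\ref{iso.prod.c}): let $\chi:=\Psi^{-1}\phi\Psi'$. By Proposition~\ref{prop:adj.prod}(\ref{adj.prod.a}), $p\sim q$ in $\PP(R)$ iff $p^\Psi,q^\Psi$ are distinct collinear points of $\fM$, and similarly in $\fM'$; so the hypothesis that $\phi$ is a $\sim$-iso\-mor\-phism says exactly that $\chi$ is a bijection $\cG\to\cG'$ preserving the collinearity graph in both directions. To upgrade ``preserves collinearity'' to ``is a collineation'' (i.e. maps lines onto lines) one uses Proposition~\ref{prop:adj.prod}(\ref{adj.prod.b}): three distinct collinear points go to three distinct collinear points, and conversely. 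Since every line of $\fM$ has at least three points and is the unique line through any two of its points (partial linear space), a line of $\fM$ is recovered from any two of its points as the set of points collinear with both along a common third point; this description is graph-theoretic and hence preserved by $\chi$ in both directions, so $\chi$ maps lines of $\fM$ bijectively onto lines of $\fM'$. Thus $\chi$ is a collineation.

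Finally, (\ref{iso.prod.c})$\Rightarrow$(\ref{iso.prod.a}): if $\chi=\Psi^{-1}\phi\Psi'$ is a collineation $\fM\to\fM'$, then in particular $\chi$ and $\chi^{-1}$ preserve collinearity of distinct points, so by Proposition~\ref{prop:adj.prod}(\ref{adj.prod.a}) the bijection $\phi=\Psi\chi\Psi'{}^{-1}$ and its inverse preserve $\sim$. To get back to $\dis$, invoke Proposition~\ref{prop:rel.prod}(\ref{rel.prod.a}) and (\ref{rel.prod.c}): componentwise, inside each factor $R_i=\End_{K_i}(\vU_i)$ one has by Remark~\ref{rem:local}-type reasoning (or directly by Proposition~\ref{prop:adjazent}) that $\dis$ is expressible through $\sim$ — more precisely, $\dis$ on the product is determined by the $\sim$-structure together with the decomposition into factors, which $\phi$ respects because it respects parallelism (here trivial) and the product structure. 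Concretely I would argue: $\phi$ is a $\sim$-iso\-mor\-phism, hence (being a bijection of a partial linear space onto another, induced by a collineation) it maps each ``factor'' — that is, each maximal subset on which two points are either equal or adjacent-within-a-single-component — onto a factor of $\fM'$, and so induces a permutation of the index sets matching $R_i$ with $R'_{\sigma(i)}$; then within corresponding factors one applies the endomorphism-ring case, where $\dis$ and $\sim$ already determine each other (this is the $m=m'=1$ content, i.e. Proposition~\ref{prop:iso.endo}, which this very proposition subsumes — so one must be careful to argue it directly from Proposition~\ref{prop:adjazent} rather than circularly). Having matched the factors and knowing $\dis$ is characterised componentwise, Proposition~\ref{prop:rel.prod}(\ref{rel.prod.a}) gives that $\phi$ and $\phi^{-1}$ preserve $\dis$.

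The main obstacle I anticipate is the last implication, specifically showing that a $\sim$-iso\-mor\-phism must respect the factorisation of $\PP(R)=\prod_i\PP(R_i)$ — equivalently, that a collineation $\fM\to\fM'$ of Segre products of Grassmann spaces must permute the factors. This is genuinely the place where the finiteness of the product and the strong connectedness of each Grassmann factor enter, and it is exactly the statement for which the paper invokes the Naumowicz–Pra\.zmowski theorem (in the generalised form relegated to the Appendix). So in the actual write-up I would not reprove this here; instead I would factor the proof as: (\ref{iso.prod.a})$\Leftrightarrow$(\ref{iso.prod.b}) is the easy graph-theoretic translation above; (\ref{iso.prod.b})$\Leftrightarrow$(\ref{iso.prod.c}) is immediate from Proposition~\ref{prop:adj.prod} plus the remark that collinearity determines lines in a partial linear space whose lines have $\geq 3$ points — and this is all that Proposition~\ref{prop:iso.prod} itself asserts, with the deeper structural consequence (that $\chi$ is induced by factor-permutations and factorwise semilinear/dualising maps) postponed to Theorem~\ref{thm:alg.darst.prod}. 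In other words, the honest proof of this proposition is short and does not touch the hard part at all; the hard part is quarantined into the subsequent theorem.
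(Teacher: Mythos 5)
Your cycle (\ref{iso.prod.a})$\Rightarrow$(\ref{iso.prod.b})$\Rightarrow$(\ref{iso.prod.c})$\Rightarrow$(\ref{iso.prod.a}) does not close, and the break is exactly at the point you try to wave away. The step (\ref{iso.prod.b})$\Rightarrow$(\ref{iso.prod.c}) rests on the claim that ``a line of $\fM$ is recovered from any two of its points as the set of points collinear with both'', i.e.\ that the \emph{binary} adjacency relation determines lines. This is false for Grassmann spaces of rank $\geq 2$: if $\dim\vU_i=2$, two planes $\vP,\vQ$ of $\vU_i\times\vU_i$ meeting in a line $\vM$ span a $3$-space $\vN$, and any plane $\vX\subset\vN$ not containing $\vM$ is adjacent to both $\vP$ and $\vQ$ without lying in the pencil $\cG[\vM,\vN]$. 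So the common neighbourhood of two adjacent points strictly contains the line, and upgrading a $\sim$-isomorphism to a collineation is genuinely Chow-theorem territory, not ``immediate from Proposition~\ref{prop:adj.prod}''. Proposition~\ref{prop:adj.prod}~(\ref{adj.prod.b}) characterizes \emph{ternary} collinearity via the relation ``$p\sim q$ via $r$'', which is defined from $\dis$, not from $\sim$; it is therefore available once you know $\phi$ is a $\dis$-isomorphism, but not when you only know it is a $\sim$-isomorphism. Your fallback for (\ref{iso.prod.c})$\Rightarrow$(\ref{iso.prod.a}) is also not sound as written: matching up the factors and then quoting Proposition~\ref{prop:iso.endo} is circular, since the paper explicitly derives Proposition~\ref{prop:iso.endo} as the case $m=m'=1$ of the present proposition, and your factor-matching itself silently assumes the Naumowicz--Pra\.zmowski theorem, which is only needed (and only invoked) for Theorem~\ref{thm:alg.darst.prod}.

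The paper avoids both traps by running the cycle the other way, (\ref{iso.prod.a})$\Rightarrow$(\ref{iso.prod.c})$\Rightarrow$(\ref{iso.prod.b})$\Rightarrow$(\ref{iso.prod.a}). For (\ref{iso.prod.a})$\Rightarrow$(\ref{iso.prod.c}) it uses that a $\dis$-isomorphism preserves the ternary relation ``$p\sim q$ via $r$'' (being $\dis$-definable), which by Proposition~\ref{prop:adj.prod}~(\ref{adj.prod.b}) is ternary collinearity; since lines carry at least three points and the spaces are partial linear, this does yield a collineation. The idea you are missing is the one that makes (\ref{iso.prod.b})$\Rightarrow$(\ref{iso.prod.a}) work: in the adjacency graph one has $\dist(p_i,q_i)=\dim\vU_i-\dim(p_i^{\Psi_i}\cap q_i^{\Psi_i})$, hence $\dist(p,q)=\sum_{i=1}^m\dist(p_i,q_i)\leq\sum_{i=1}^m\dim\vU_i$ with equality precisely when $p\dis q$. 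A $\sim$-isomorphism is an isomorphism of adjacency graphs, preserves graph distance, and therefore preserves the property of being at maximal distance --- that is how $\dis$ is recovered from $\sim$, with no appeal to the factor decomposition and no circularity. Your direction (\ref{iso.prod.a})$\Rightarrow$(\ref{iso.prod.b}) is correct but is the only part of the equivalence that is ``easy''.
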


\begin{proof}
(\ref{iso.prod.a}) $\Rightarrow $ (\ref{iso.prod.c}): By definition, for all
$p$, $q$ and $r$ in $\PP(R)$ we have $p\sim q$ via $r$ if, and only if,
$p^\phi\sim q^\phi$ via $r^\phi$. Recall that every line of $\fM$ has at least
three distinct points. The same applies to $\PP(R')$ and $\fM'$. Hence the
characterization of collinear points in Proposition~\ref{prop:adj.prod}
(\ref{adj.prod.b}) gives the required result.

(\ref{iso.prod.c}) $\Rightarrow $ (\ref{iso.prod.b}): This is immediate from
Proposition~\ref{prop:adj.prod} (\ref{adj.prod.a}).

(\ref{iso.prod.b}) $\Rightarrow $ (\ref{iso.prod.a}): For each
$i\in\{1,2,\ldots,m\}$ let us consider $\PP(R_i)$ as the set of vertices of the
so-called \emph{adjacency graph}, where two vertices are joined by an edge if,
and only if, they are adjacent. The \emph{distance function\/} in this graph
will be written as $\dist$. By Proposition~\ref{prop:adjazent}
(\ref{adjazent.a}), the bijection $\Psi_i$ is an isomorphism of the adjacency
graph on $\PP(R_i)$ onto the Grassmann graph on $\cG_i$. A simple induction
shows that
\begin{equation}\label{eq:dist.i}
     \forall\, p_i,q_i\in\PP(R_i) :     \dist(p_i,q_i)=
     \dim \vU_{i}-\dim(p_i^{\Psi_i}\cap q_i^{\Psi_i}) \leq \dim\vU_{i}.
\end{equation}
The maximal distance $\dim\vU_{i}$ is assumed precisely when $p_i\dis q_i$.

Similarly, $\PP(R)$ and its adjacency relation give rise to an adjacency graph,
and (\ref{eq:dist.i}), together with Proposition~\ref{prop:rel.prod}
(\ref{rel.prod.c}), yields the following formula:
\begin{equation}\label{eq:dist}
  \forall\, p,q\in\PP(R) : \dist(p,q)= \sum_{i=1}^m \dist(p_i,q_i)
  \leq \sum_{i=1}^m\dim\vU_{i}.
\end{equation}
By Proposition~\ref{prop:rel.prod} (\ref{rel.prod.a}), here the points $p$ and
$q$ are distant, if, and only if, their distance attains the bound given in
(\ref{eq:dist}).

Let now $\phi$ be a $\sim$-iso\-mor\-phism. Then two points of $\PP(R)$ with
maximal distance (in the adjacency graph) go over to points of $\PP(R')$ with
maximal distance and vice versa. By the above, this means that $\phi$ is a
$\dis$-iso\-mor\-phism.
\end{proof}

\begin{txt}
This brings us to our main result:
\end{txt}

\begin{thm}\label{thm:alg.darst.prod}
Let $R=\prod_{i=1}^mR_i$ and $R'=\prod_{j=1}^{m'}R'_j$ with
$R_i=\End_{K_i}(\vU_{i})$, $R_j'=\End_{K'_j}(\vU'_{j})$, where $1\leq
\dim\vU_{i},\;\dim\vU'_{j}<\infty$. For a mapping $\phi:\PP(R)\to\PP(R')$, the
following statements are equivalent:

\vspace{-0.9ex}\begin{enumerate}\itemsep0pt
 \item\label{alg.darst.prod.a}
$\phi$ is a $\dis$-iso\-mor\-phism.

 \item\label{alg.darst.prod.b}
$m=m'$, and there is a permutation $\sigma$ of $\{1,2,\ldots,m\}$ such that
for each $p\in\PP(R)$ and each $k\in\{1,2,\ldots,m\}$ we have
$(p^\phi)_{k^\sigma}=(p_k)^{\phi_k}$ for $\dis$-iso\-mor\-phisms
$\phi_k:\PP(R_k)\to\PP(R'_{k^\sigma})$.
\end{enumerate}
In this case, if $\dim\vU_{i}>1$ for all $i\in\{1,2,\ldots,m\}$, then
$\phi=\widetilde\alpha\widetilde\gamma$, where $\gamma\in\GL_2(R')$ and
$\alpha:R\to R'$ is a Jordan isomorphism such that for each $x\in R$ and each
$k\in\{1,2,\ldots,m\}$ we have $(x^\alpha)_{k^\sigma}=(x_k)^{\alpha_k}$, with
$\alpha_k:R_k\to R'_{k^\sigma}$ an isomorphism or an anti-iso\-mor\-phism.
\end{thm}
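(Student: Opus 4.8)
The plan is to combine Proposition~\ref{prop:iso.prod} with the generalized Naumowicz--Pra\.zmowski theorem (Theorem~\ref{thm:NP}) on collineations between Segre products of strongly connected partial linear spaces, and then to translate the geometric description back into algebra via Theorem~\ref{thm:alg.darst}. First I would prove the equivalence of (\ref{alg.darst.prod.a}) and (\ref{alg.darst.prod.b}). For the direction (\ref{alg.darst.prod.a})$\Rightarrow$(\ref{alg.darst.prod.b}): by Proposition~\ref{prop:iso.prod}, $\Psi^{-1}\phi\Psi'$ is a collineation $\fM\to\fM'$. Each factor $\fM_i$ is a Grassmann space, hence (by \ref{:adjacent}) a strongly connected partial linear space with at least one line and at least three points per line. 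Invoking Theorem~\ref{thm:NP}, any collineation between the finite Segre products $\fM=\prod_{i=1}^m\fM_i$ and $\fM'=\prod_{j=1}^{m'}\fM'_j$ forces $m=m'$ and is, up to a permutation $\sigma$ of the index set, a ``product'' of collineations $\fM_k\to\fM'_{k^\sigma}$; that is, $(x^{\Psi^{-1}\phi\Psi'})_{k^\sigma}$ depends only on $x_k$, via a collineation $\fM_k\to\fM'_{k^\sigma}$. Conjugating back through the component bijections $\Psi_k$, $\Psi'_{k^\sigma}$ and using Proposition~\ref{prop:iso.endo}, each induced component map $\phi_k:\PP(R_k)\to\PP(R'_{k^\sigma})$ is a $\dis$-iso\-mor\-phism, and $(p^\phi)_{k^\sigma}=(p_k)^{\phi_k}$. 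The converse (\ref{alg.darst.prod.b})$\Rightarrow$(\ref{alg.darst.prod.a}) is routine: if each $\phi_k$ is a $\dis$-iso\-mor\-phism, then by Proposition~\ref{prop:rel.prod}~(\ref{rel.prod.a}) the componentwise map $\phi$ preserves $\dis$ in both directions, and it is a bijection because $\sigma$ is a permutation and each $\phi_k$ is bijective.

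For the final assertion, assume $\dim\vU_i>1$ for all $i$. Apply Theorem~\ref{thm:alg.darst}~(\ref{alg.darst.b}) to each component $\dis$-iso\-mor\-phism $\phi_k:\PP(R_k)\to\PP(R'_{k^\sigma})$: there is an isomorphism or anti-iso\-mor\-phism $\alpha_k:R_k\to R'_{k^\sigma}$ and a $\gamma_k\in\GL_2(R'_{k^\sigma})$ with $\phi_k=\widetilde\alpha_k\widetilde\gamma_k$. Now I would assemble these into global data: define $\alpha:R\to R'$ by $(x^\alpha)_{k^\sigma}:=(x_k)^{\alpha_k}$, and define $\gamma\in\GL_2(R')\cong\prod_j\GL_2(R'_j)$ by placing $\gamma_k$ in the $k^\sigma$-th slot. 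Since each $\alpha_k$ is either a (ring-)homomorphism or an anti-homo\-mor\-phism, it is in particular a Jordan homomorphism; a componentwise product of Jordan homomorphisms is again a Jordan homomorphism (the identities $1^\alpha=1$ and $(aba)^\alpha=a^\alpha b^\alpha a^\alpha$ hold componentwise), so $\alpha$ is a Jordan isomorphism of $R$ onto $R'$. It remains to check $\phi=\widetilde\alpha\widetilde\gamma$. Because the passage $R=\prod R_i\rightsquigarrow\PP(R)=\prod\PP(R_i)$ and the constructions $\alpha\mapsto\widetilde\alpha$ (for homomorphisms as in (\ref{def:tilde.alpha}), for anti-homo\-mor\-phisms as in (\ref{def:tilde.alpha.anti})) and $\gamma\mapsto\widetilde\gamma$ are all compatible with taking components, one gets $(p^{\widetilde\alpha\widetilde\gamma})_{k^\sigma}=(p_k)^{\widetilde\alpha_k\widetilde\gamma_k}=(p_k)^{\phi_k}=(p^\phi)_{k^\sigma}$ for every $p$ and every $k$, hence $\phi=\widetilde\alpha\widetilde\gamma$.

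I expect the main obstacle to be the rigorous application of the Segre-product collineation theorem, i.e.\ verifying that Theorem~\ref{thm:NP} applies in exactly the generality needed here (finite products of Grassmann spaces, each possibly with small dimension, and the hypotheses ``strongly connected partial linear space with a line and $\geq 3$ points on each line''), and extracting from it the clean componentwise form stated in (\ref{alg.darst.prod.b}) with the permutation $\sigma$. The special low-dimensional cases deserve a brief comment: when $\dim\vU_k=1$ the factor $\fM_k$ degenerates (a single point, so $\PP(R_k)$ is a projective line over a field), and one must make sure such trivial factors are handled — they contribute no obstruction to the product-collineation structure but also do not admit the algebraic description of the last paragraph, which is precisely why that paragraph carries the hypothesis $\dim\vU_i>1$ for all $i$. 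The remaining verifications — that componentwise products of Jordan isomorphisms are Jordan isomorphisms, and that $\widetilde{\phantom{x}}$ commutes with taking direct-product components — are straightforward bookkeeping with the formulas of \ref{:beispiele} and \ref{para:parallel+adj}, and I would present them compactly rather than in full detail.
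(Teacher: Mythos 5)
Your proposal follows exactly the route of the paper's own proof: Proposition~\ref{prop:iso.prod} to pass to a collineation of Segre products, Theorem~\ref{thm:NP} plus Proposition~\ref{prop:iso.endo} to split it componentwise up to a permutation, and Theorem~\ref{thm:alg.darst} to obtain the algebraic form of each factor, assembled into a Jordan isomorphism and a projectivity. The paper states these steps even more tersely than you do, so your additional bookkeeping (compatibility of $\widetilde{\alpha}$ with components via Bartolone's formula, and the remark on the degenerate factors with $\dim\vU_k=1$) is correct elaboration rather than a different argument.
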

\begin{proof}
(\ref{alg.darst.prod.a}) $\Rightarrow $ (\ref{alg.darst.prod.b}): By
Proposition~\ref{prop:iso.prod}, the mapping $\Psi^{-1}\phi\Psi':\cG\to\cG'$ is
a collineation $\fM\to\fM'$. Since each $\fM_i$ is strongly connected and
contains at least one line, the assertion follows from Theorem~\ref{thm:NP} in
the Appendix and Proposition~\ref{prop:iso.endo}.

While (\ref{alg.darst.prod.b}) $\Rightarrow $ (\ref{alg.darst.prod.a}) is
obvious, the other assertions follow from Theorem~\ref{thm:alg.darst}.
\end{proof}

\begin{txt}
From this and Proposition~\ref{prop:morphRquer} we obtain the following:
\end{txt}

\begin{cor}\label{cor:iso}
Let $R$ and $R'$ be semilocal rings. Then a bijection $\phi:\PP(R)\to\PP(R')$
is a $\dis$-iso\-mor\-phism if, and only if, it is a
$\parallel$-iso\-mor\-phism such that the induced bijection
$\overline\phi:\PP(\overline R)\to\PP(\overline{R'})$ \gerade(see
\gerade{(\ref{def:phiquer}))} is a mapping as in
Theorem~\gerade{\ref{thm:alg.darst.prod}}.
\end{cor}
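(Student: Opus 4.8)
Let $R$ and $R'$ be semilocal rings. Then a bijection $\phi:\PP(R)\to\PP(R')$ is a $\dis$-iso\-mor\-phism if, and only if, it is a $\parallel$-iso\-mor\-phism such that the induced bijection $\overline\phi:\PP(\overline R)\to\PP(\overline{R'})$ is a mapping as in Theorem~\ref{thm:alg.darst.prod}.
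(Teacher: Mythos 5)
Your proposal contains no proof at all: it merely restates the corollary verbatim. Even though the paper itself dispatches this corollary in one line (``From this and Proposition~\ref{prop:morphRquer} we obtain the following''), an actual argument is being invoked there, and you need to supply at least its skeleton. Concretely, the missing content is the following. For the ``only if'' direction: every $\dis$-isomorphism is automatically a $\parallel$-isomorphism, because the relation $\parallel$ is defined purely in terms of $\dis$ via $\dis(p)\subseteq\dis(q)$, so it is preserved by any $\dis$-isomorphism in both directions. Then Proposition~\ref{prop:morphRquer} tells you that $\phi$ is a $\dis$-isomorphism if, and only if, the induced map $\overline\phi:\PP(\overline R)\to\PP(\overline{R'})$ is one. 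Since $R$ and $R'$ are semilocal, the quotients $\overline R=R/\rad R$ and $\overline{R'}=R'/\rad R'$ are semisimple, hence by the Wedderburn--Artin theorem isomorphic to finite direct products of matrix rings over fields, i.e.\ of endomorphism rings $\End_{K_i}(\vU_i)$ with $1\le\dim\vU_i<\infty$. This is exactly the setting of Theorem~\ref{thm:alg.darst.prod}, which therefore characterizes the $\dis$-isomorphisms $\PP(\overline R)\to\PP(\overline{R'})$ as the mappings described there. The ``if'' direction is the same chain read backwards: if $\phi$ is a $\parallel$-isomorphism and $\overline\phi$ is of the form in Theorem~\ref{thm:alg.darst.prod}, then $\overline\phi$ is a $\dis$-isomorphism by that theorem, and Proposition~\ref{prop:morphRquer} lifts this back to $\phi$.

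Without these three ingredients --- (1) $\dis$-isomorphisms are $\parallel$-isomorphisms, (2) the reduction to $\overline\phi$ via Proposition~\ref{prop:morphRquer}, and (3) the identification of $\overline R$, $\overline{R'}$ as semisimple rings so that Theorem~\ref{thm:alg.darst.prod} applies --- the statement is not proved. In particular, step (3) is where the hypothesis ``semilocal'' is actually used, and omitting it leaves the role of that hypothesis unexplained.
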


\begin{txt}
From Theorem~\ref{thm:alg.darst.prod}, Corollary~\ref{cor:jordan}
(\ref{jordan.a}) and Hua's theorem \cite[Thm.~2.25]{wan-96}, or from
Theorem~\ref{thm:alg.darst.prod} and \cite[Thm.~3.24]{wan-96}, we infer the
following algebraic description of Jordan isomorphisms between semisimple
rings.
\end{txt}

\begin{cor}\label{cor:jordan-prod}
Let $R=\prod_{i=1}^mR_i$ and $R'=\prod_{j=1}^{m'}R'_j$ with $R_i$, $R_j'$
matrix rings over fields. Let $\omega:R\to R'$ be a Jordan iso\-mor\-phism.
Then $m=m'$, and there is a permutation $\sigma$ of $\{1,2,\ldots,m\}$ such
that for each $x\in R$ and each $k\in \{1,2,\ldots,m\}$ we have
$(x^\omega)_{k^\sigma}=(x_k)^{\omega_k}$, where $\omega_k:R_k\to R'_{k^\sigma}$
is an isomorphism or an anti-iso\-mor\-phism.
\end{cor}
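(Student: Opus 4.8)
The plan is to reduce the statement about Jordan isomorphisms of semisimple rings to the already-proven description of $\dis$-iso\-mor\-phisms in Theorem~\ref{thm:alg.darst.prod}. First I would recall from \ref{:semilokal} that each semisimple ring has stable rank $2$, so that formulas \gerade{(\ref{eq:bart1})} and \gerade{(\ref{eq:bart2})} apply: the Jordan isomorphism $\omega:R\to R'$ gives rise to a $\dis$-mor\-phism $\widetilde\omega:\PP(R)\to\PP(R')$. Since $\omega$ is bijective, $\omega^{-1}$ is again a Jordan isomorphism, and $\widetilde{\omega^{-1}}$ is the inverse of $\widetilde\omega$; hence $\widetilde\omega$ is in fact a $\dis$-iso\-mor\-phism.

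Next I would apply Theorem~\ref{thm:alg.darst.prod} to $\phi=\widetilde\omega$. This immediately yields $m=m'$ and a permutation $\sigma$ of $\{1,2,\ldots,m\}$ together with $\dis$-iso\-mor\-phisms $\phi_k:\PP(R_k)\to\PP(R'_{k^\sigma})$ with $(p^{\widetilde\omega})_{k^\sigma}=(p_k)^{\phi_k}$ for all $p$ and all $k$. To recover the algebraic statement about $\omega$ itself, write each $R_k$ as an endomorphism ring $\End_{K_k}(\vU_k)$ (with $\dim\vU_k$ the matrix size). If every $\dim\vU_k>1$, Theorem~\ref{thm:alg.darst.prod} already provides a Jordan isomorphism $\alpha:R\to R'$ and a $\gamma\in\GL_2(R')$ with $\widetilde\omega=\widetilde\alpha\widetilde\gamma$, where $\alpha_k:R_k\to R'_{k^\sigma}$ is an isomorphism or anti-iso\-mor\-phism. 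The point that needs a small argument is that $\alpha=\omega$ and $\gamma$ is trivial on the relevant coordinates; this is exactly the computation carried out in the proof of Corollary~\ref{cor:jordan}, using \gerade{(\ref{eq:sigma})} with $a=1$, $b=X+1$ to pin down $R(X,1)^{\widetilde\omega}=R'(X^\omega,1)$, and then evaluating $\widetilde\alpha\widetilde\gamma$ on the three standard points $R(1,0)$, $R(0,1)$, $R(1,1)$ to force $\gamma=\diag(G,G)$ (coordinatewise), whence $\omega_k=\alpha_k$ is an isomorphism or anti-iso\-mor\-phism as claimed.

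The remaining case is that some $\dim\vU_k=1$, i.e.\ $R_k$ is a field. Here Theorem~\ref{thm:alg.darst.prod} does not directly describe $\phi_k$, but $\phi_k:\PP(R_k)\to\PP(R'_{k^\sigma})$ is still a $\dis$-iso\-mor\-phism between projective lines over fields, so $R'_{k^\sigma}$ is also a field; applying the same Bartolone-type reduction coordinatewise, $\omega_k:R_k\to R'_{k^\sigma}$ is a Jordan isomorphism of fields, and by Hua's theorem \cite[Thm.~2.25]{wan-96} it is an isomorphism or anti-iso\-mor\-phism. Combining the two cases coordinatewise gives the full statement. The main obstacle is purely bookkeeping: one must check that the coordinatewise behaviour of $\widetilde\omega$ expressed through $\sigma$ and the $\phi_k$ genuinely transfers to a coordinatewise decomposition of $\omega$ itself, i.e.\ that $(x^\omega)_{k^\sigma}=(x_k)^{\omega_k}$ with $\omega_k$ the ring(-anti)iso\-mor\-phism recovered above — this is where the identity \gerade{(\ref{eq:sigma})} and the normalization of $\gamma$ do the essential work, and no genuinely new idea beyond Corollary~\ref{cor:jordan} is needed.
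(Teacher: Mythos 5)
Your argument is correct and follows essentially the same route as the paper, which derives the corollary from Theorem~\ref{thm:alg.darst.prod} together with Corollary~\ref{cor:jordan}~(\ref{jordan.a}) and Hua's theorem: pass from $\omega$ to the $\dis$-iso\-mor\-phism $\widetilde\omega$ via Bartolone's formula, decompose it coordinatewise, and use (\ref{eq:sigma}) to transfer the decomposition back to $\omega$ itself. The only stylistic difference is your detour through normalizing $\gamma$; it is slightly quicker to note directly from $(p^{\widetilde\omega})_{k^\sigma}=(p_k)^{\phi_k}$ and (\ref{eq:sigma}) that $(x^\omega)_{k^\sigma}$ depends only on $x_k$, so each $\omega_k$ is a Jordan isomorphism of matrix rings and the cited results apply componentwise.
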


\begin{txt}
Note that this can also be shown in a purely algebraic way: Just replace
``Theorem~2'' with ``\cite[Thm.~3.24]{wan-96}'' in the proof of
\cite[Thm.~3]{kapl-47}, where a similar result is shown for direct products of
finitely many simple algebras of finite dimension.
\end{txt}

\section{Appendix.}

\begin{nrtxt}
The following is a generalization of \cite[Prop.~1.10]{nau+p-01}, as we
consider not only automorphisms but also isomorphisms of product spaces. For
the reader's convenience we stick close to the notation used in
\cite{nau+p-01}:
\end{nrtxt}

\begin{thm}\label{thm:NP}
Let $\fM_i =(X_i,\cL_i)$, $i\in\{1,2,\ldots,m\}$, and $\fM'_j =(X'_j,\cL'_j)$,
$j\in\{1,2,\ldots,m'\}$ be strongly connected partial linear spaces with at
least one line, where $m,m'\geq 1$. Suppose, furthermore, that
\begin{equation}\label{eq:f}
   f:\prod_{i=1}^m X_i \to \prod_{j=1}^{m'} X'_j
\end{equation}
is a collineation of $\fM:=\prod_{i=1}^m \fM_i$ onto $\fM':=\prod_{j=1}^{m'}
\fM'_j$. Then the following assertions hold:

\vspace{-0.9ex}\begin{enumerate}\itemsep0pt
 \item\label{NP.a} $m=m'$.
 \item\label{NP.b} There exists a permutation $\sigma$ of the set
 $\{1,2,\ldots,m\}$, and for each $k\in\{1,2,\ldots,m\}$ there is a
 collineation $f_k:X_k\to X'_{\sigma(k)}$ such that
 \begin{equation}\label{}
    \big(f(c_1,c_2,\ldots,c_m)\big)_{\sigma(k)} = f_k(c_k)
\end{equation}
for all points $(c_1,c_2,\ldots,c_m)\in \prod_{i=1}^m X_i$.
\end{enumerate}
\end{thm}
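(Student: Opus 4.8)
\textbf{Proof proposal for Theorem~\ref{thm:NP}.}

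The plan is to reduce the assertion to the automorphism case treated in \cite[Prop.~1.10]{nau+p-01} by showing that the only genuinely new feature --- allowing the source and target product decompositions to differ --- costs us nothing once we analyse how $f$ transports the natural ``parallelism of factors'' on a Segre product. First I would make precise the combinatorial object that the proof of \cite[Prop.~1.10]{nau+p-01} really uses. On $\fM=\prod_{i=1}^m\fM_i$ call two lines $L,L'\in\cL$ \emph{factor-equivalent} if they lie in the same $\cL_{(i)}$, equivalently (and this is the intrinsic reformulation one must verify) if the strong subspace generated by $L$ and the strong subspace generated by $L'$ can be linked by a short chain of strong subspaces that are mutually ``coplanar'' in a suitable sense. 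The key point is that this relation on lines is defined purely in terms of the incidence structure of $\fM$, so a collineation $f:\fM\to\fM'$ must carry factor-equivalence classes in $\fM$ to factor-equivalence classes in $\fM'$. Since there are exactly $m$ such classes in $\fM$ (one per factor, each non-empty because every $\fM_i$ has a line) and exactly $m'$ in $\fM'$, this immediately gives $m=m'$ and produces the permutation $\sigma$ as the induced bijection on classes.

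Next I would upgrade this bijection-on-classes to the factorwise form of $f$. Fix $k$ and a point $c=(c_1,\dots,c_m)$. The lines of $\cL_{(k)}$ through $c$ sweep out, as we vary them and iterate, precisely the ``slice'' $\{x\in\cP\mid x_i=c_i\ \forall i\neq k\}$, which is a copy of $X_k$ embedded in $\cP$; intrinsically this slice is the connected component of $c$ in the partial linear space whose lines are only those of $\cL_{(k)}$, and by strong connectedness of $\fM_k$ this component is all of the $k$-th slice. Because $f$ sends $\cL_{(k)}$ onto $\cL'_{(\sigma(k))}$, it sends the $k$-th slice through $c$ onto the $\sigma(k)$-th slice through $f(c)$, i.e.\ onto $\{y\mid y_j=(f(c))_j\ \forall j\neq\sigma(k)\}$. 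Restricting $f$ to this slice and reading off the $\sigma(k)$-th coordinate therefore yields a map $f_{k,c}:X_k\to X'_{\sigma(k)}$, and it is a collineation $\fM_k\to\fM'_{\sigma(k)}$ because lines of the slice are exactly (images of) lines of $\fM_k$. The remaining task is to see that $f_{k,c}$ does not depend on the other coordinates of $c$: moving $c_i$ for some $i\neq k$ means moving along a line of $\cL_{(i)}$, under which $f$ moves along a line of $\cL'_{(\sigma(i))}$, i.e.\ changes only the $\sigma(i)$-th coordinate of the image and hence leaves the $\sigma(k)$-th coordinate rule $f_{k,c}$ intact. Iterating over all $i\neq k$ and using strong connectedness of each $\fM_i$ to pass between arbitrary values of $c_i$, one gets a single $f_k:=f_{k,c}$ independent of $c$, and by construction $\big(f(c_1,\dots,c_m)\big)_{\sigma(k)}=f_k(c_k)$ for all $c$, which is (\ref{}).

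The main obstacle I anticipate is purely bookkeeping rather than conceptual: making the ``factor-equivalence of lines'' and the ``$k$-th slice'' notions genuinely intrinsic, so that they are manifestly preserved by any collineation, while staying within the weak axioms allowed here (no richness conditions on $\cP$ or $\cL$, lines possibly short, factors possibly one-dimensional in the ring-theoretic translation). Here is exactly where strong connectedness is indispensable, and where one should simply invoke the corresponding steps in \cite[Prop.~1.10]{nau+p-01} --- our situation with $m\neq m'$ a priori changes nothing in those arguments, since they never use that the two sides have equally many factors; they only use that each factor is a strongly connected partial linear space with a line. So the cleanest write-up is: repeat the intrinsic characterisation of $\cL_{(k)}$-classes and slices verbatim from \cite{nau+p-01}, observe that a collineation between products must match these classes, deduce $m=m'$ and $\sigma$, and then extract the $f_k$ as above. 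No step requires more than the hypotheses listed, and the displayed formula follows immediately once the coordinatewise independence is in hand.
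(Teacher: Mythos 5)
Your overall architecture is the same as the paper's: recover the factors of the Segre product from an intrinsic, collineation-invariant equivalence, count its classes to get $m=m'$ and the permutation $\sigma$, and then defer the extraction of the component collineations $f_k$ to the machinery of \cite{nau+p-01} (the paper is in fact terser than you on part (b), simply asserting that Propositions 1.6, 1.9 and 1.10 of \cite{nau+p-01} go through mutatis mutandis once $m=m'$ is known). However, the one step you leave as ``the intrinsic reformulation one must verify'' is exactly the step that fails in the form you propose. Every strong subspace of $\fM$ with at least two points is contained in a single slice $\{x\mid x_j=p_j\ \forall j\neq i\}$, because two collinear points of a Segre product differ in at most one coordinate; hence in any chain $Y_0,Y_1,\ldots,Y_b$ of strong subspaces with $\#(Y_l\cap Y_{l-1})\geq 2$ all members lie in the slice containing $Y_0$. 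Consequently a base-point-free ``factor-equivalence'' of lines defined by such chains has one class per pair (factor, slice), not one class per factor, and counting its classes does not yield $m$.

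The paper's remedy is to localize before counting: fix a point $p$, restrict the chain relation $\approx$ to strong subspaces containing $p$ and at least one further point, and invoke \cite[Lemma~1.5]{nau+p-01} to see that this restricted relation has exactly $m$ classes; since $f$ and $f^{-1}$ preserve strong subspaces with more than one point and the relation $\approx$, this gives $m=m'$. (One also needs that every point of a strongly connected $\fM_i$ possessing a line lies on a line, so that every factor is represented among the lines through $p$; this follows from strong connectedness.) With that repair, the rest of your argument --- slices mapping to slices, extraction of $f_{k,c}$, and the coordinate-independence step using connectedness of each $\fM_i$ (again a consequence of strong connectedness plus the existence of a line) --- is sound and is essentially the content of the cited propositions of \cite{nau+p-01} that the paper invokes for part (b).
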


\begin{proof} (\ref{NP.a}): Let $S$ and $T$ be strong subspaces of $\fM$.
Following \cite[p.~131]{nau+p-01}, we let $S \approx T$ if, and only if, there
exists a finite sequence $Y_0,Y_1,\ldots,Y_b$ of strong subspaces such that
$S=Y_0$, $T=Y_b$, and $\#(Y_i\cap Y_{i-1})\geq 2$ for all
$i\in\{1,2,\ldots,b\}$. This is an equivalence relation on the set of
\emph{all\/} strong subspaces.

Let us choose a point $p$ of $\fM$. We restrict $\approx$ to the (non-empty)
set of strong subspaces \emph{which contain $p$ and at least one more point}.
By \cite[Lemma~1.5]{nau+p-01}, there are precisely $m$ equivalence classes of
this restricted relation. Similarly, we have a relation $\approx'$ on the set
of all strong subspaces of $\fM'$. There are precisely $m'$ equivalence classes
when $\approx'$ is restricted to the set of strong subspaces of $\fM'$ which
contain a fixed point $p'$ and at least one more point. As $f$ and $f^{-1}$
preserve strong subspaces with more than one point, and because of $S\approx
T\Leftrightarrow f(S)\approx' f(T)$, we finally get $m=m'$.
\par
(\ref{NP.b}): By virtue of (\ref{NP.a}), it is easy to see that
Proposition~1.6, Corollary~1.9, and Proposition~1.10 in \cite{nau+p-01} hold,
mutatis mutandis, even under our assumptions. As a matter of fact, the proofs
given in \cite{nau+p-01} and the ones which are needed now are, up to
notational changes, the same. Therefore the assertion follows.
\end{proof}

\begin{txt}
\textbf{Acknowledgement.} The authors are grateful to Peter \v{S}emrl
(Ljubljana) and Matej Bre\v{s}ar (Maribor) for their kind assistance when
searching for literature on Jordan homomorphisms.
\end{txt}

%%%%%%%%%%%%%%%%%%%%%%%%%%%%%%%%%%%%%%%%%%%%%%%%%%%%%%%%%%%%%

%\bibliographystyle{amsplain} %acm amsplain plain
% manuell anpassen!
%\bibliography{d:/forschung/separata/ketten}

\begin{txt}\footnotesize
Andrea Blunck\\
Fachbereich Mathematik\\
Universit\"at Hamburg\\
Bundesstra{\ss}e 55\\
D--20146 Hamburg\\
Germany\\
email: andrea.blunck@math.uni-hamburg.de
\end{txt}
\begin{txt}\footnotesize
Hans Havlicek\\
Institut f\"ur Diskrete Mathematik und Geometrie\\
Technische Universit\"at\\
Wiedner Hauptstra{\ss}e 8--10\\
A--1040 Wien\\
Austria\\
email: havlicek@geometrie.tuwien.ac.at
\end{txt}
\end{document}